\newtheorem{thm}{Theorem}
\newtheorem{conj}{Conjecture}
\newtheorem{prop}{Proposition}
\newtheorem{lem}[prop]{Lemma}
\newtheorem{cor}[thm]{Corollary}
\theoremstyle{definition}
\newcommand{\mb}{\mathbb}
\newcommand{\mc}{\mathcal}
\newcommand{\ol}{\overline}
\newcommand{\ul}{\underline}
\newcommand{\leqs}{\leqslant }
\newcommand{\geqs}{\geqslant }
\newcommand{\1}{\mathds{1}}
\newcommand{\norm}[1]{\lVert #1\rVert}
\newcommand{\normh}{\norm{h\alpha}}
\newcommand{\near}[1]{\lfloor #1\alpha\rceil}
\newcommand{\WIt}{W_{\mc{I},t}}
\newcommand{\WJt}{W_{\mc{J},t}}
\newcommand{\wit}{w^{\mc{I}}_t}
\newcommand{\wjt}{w^{\mc{J}}_t}
\newcommand{\be}{\begin{equation*}}
\newcommand{\ee}{\end{equation*} }
\newcommand{\ben}{\begin{equation}}
\newcommand{\een}{\end{equation} }
\newcommand{\bs}{\begin{split}}
\newcommand{\es}{\end{split}}
\newcommand{\bmu}{\begin{multline*}}
\newcommand{\emu}{\end{multline*}}
\newcommand{\bmun}{\begin{multline}}
\newcommand{\emun}{\end{multline}}
\newcommand{\R}{\mathbb{R}}
\newcommand{\Z}{\mathbb{Z}}
\begin{document}

\author[W. Heap]{Winston Heap}
\email{winstonheap@gmail.com}

\author[A. Sahay]{Anurag Sahay}
\email{anuragsahay@purdue.edu}

\title[The fourth moment of the Hurwitz zeta function]{The fourth moment of the Hurwitz zeta function}

\begin{abstract} We prove a sharp upper bound for the fourth moment of the Hurwitz zeta function $\zeta(s,\alpha)$ on the critical line when the shift parameter $\alpha$ is irrational and of irrationality exponent strictly less than $3$. As a consequence, we determine the order of magnitude of the $2k$th moment for all $0 \leqs k \leqs 2$ in this case. In contrast to the Riemann zeta function and other $L$-functions from arithmetic, these grow like $T (\log T)^k$. This suggests, and we conjecture, that the value distribution of $\zeta(s,\alpha)$ on the critical line is Gaussian.  \end{abstract}

\maketitle

\section{Introduction}

Let $0 < \alpha \leqs 1$ be a fixed shift parameter. The Hurwitz zeta function is defined  by 
\[ \zeta(s,\alpha) = \sum_{n\geqs 0} \frac{1}{(n+\alpha)^s} \]
for $\Re(s) > 1$, and can be extended to a meremorphic function on $\mathbb{C}$ with a simple pole at $s = 1$. In this paper, we are interested in the value distribution of $\zeta(s,\alpha)$ on the critical line. As we shall see, this can depend heavily on the Diophantine nature of the shift parameter $\alpha$.

The case of rational $\alpha$ is relatively well understood. When $\alpha = 1$, we get back the usual Riemann zeta function: $\zeta(s,1)=\zeta(s)$, and an easy calculation shows that $\zeta(s,\tfrac12) = (2^s - 1) \zeta(s)$. 
When $\alpha=a/q$  with $q > 2$, the orthogonality of Dirichlet characters implies that
\begin{equation}\label{rational hurwitz} 
\zeta(s,\tfrac{a}{q}) = \frac{q^s}{\varphi(q)}\sum_{\chi \bmod q} \ol{\chi(a)} L(s,\chi), 
\end{equation}
giving a connection to Dirichlet $L$-functions. 
In these cases, the typical fluctuations of the underlying $L$-functions at large height are determined in accordance with Selberg's central limit theorem \cite{Sel, Tthesis}. This gives a log-normal distribution: for fixed $V$, 
\[
\frac{1}{T}\mathrm{meas}\bigg(\bigg\{t\in [T,2T]: \frac{\log|\zeta(\tfrac12+it)|}{\sqrt{\tfrac12\log\log T}}\geqs V\bigg\}\bigg)\sim \frac{1}{\sqrt{2\pi}}\int_V^\infty e^{-x^2/2}dx
\]
as $T\to\infty$ where $\mathrm{meas}$ denotes Lebesgue measure.

Going beyond typical fluctuations, larger values are determined by the moments \cite{Rdev}. Here, the Keating--Snaith conjecture \cite{keatingsnaithzeta} predicts that for real $k>0$
\[
\frac{1}{T}\int_T^{2T}|\zeta(\tfrac12+it)|^{2k}dt\sim c_k(\log T)^{k^2} 
\]  
with an explicitly given constant $c_k$ (see also \cite{CFKRS, CG, CGk, DGH}). This conjecture is only known in the classical cases of $k=1,2$ due to Hardy-Littlewood \cite{HL} and Ingham \cite{I}, respectively -- in general the conjecture is wide open. However, recently a great deal of progress has been made on the order, especially under the assumption of the Riemann Hypothesis (RH). It is now known that 
\[
\frac{1}{T}\int_T^{2T}|\zeta(\tfrac12+it)|^{2k}dt\asymp (\log T)^{k^2} 
\]  
for all real $k\geqs 0$ on RH \cite{Ha,HSlower, Sound} (and unconditionally for $0\leqs k\leqs 2$, \cite{HRSupper}). Although we have stated these results for the Riemann zeta function, they are known (or expected) to hold for any `reasonable' $L$-function i.e. those coming from arithmetic and in the Selberg class, say. Furthermore, finite sets of $L$-functions that satisfy Selberg's orthonormality conjecture behave independently at the scale of the central limit theorem \cite{bombhej,selbergoldandnew}, but conjecturally show slight dependence coming from the Euler product at the scale of moments \cite{HeapMPCPS}. Thus, for such $L$-functions (in particular for Dirichlet $L$-functions), the joint value distribution on the critical line is well understood, at least conjecturally. This gives satisfactory answers for distributional questions about the Hurwitz zeta function for rational shifts $\alpha$, as carried out in \cite{Sahay}.
  
When $\alpha$ is irrational, the behaviour of the Hurwitz zeta function is more mysterious and the lack of connection with $L$-functions allows many peculiarities to arise. Perhaps most notable -- and in fact this also holds for any rational $\alpha\neq 1,\tfrac12$ -- is that there is no Euler product and consequently the Riemann Hypothesis fails quite spectacularly. It is classical \cite{cassels1961footnote,davenport1936zeros} that for any fixed $\delta > 0$ and $\alpha \neq 1,\tfrac12$, $\zeta(s,\alpha)$ vanishes infinitely often in the strip $1 < \Re(s) < 1+\delta$ and a similar result is known for substrips of the critical strip \cite{gonekthesis,Vzeros} when $\alpha$ is transcendental or rational (see \cite{minealgirr} for recent progress on the difficult case of algebraic irrational).

Nevertheless, since $\zeta(s,\alpha)$ possesses a functional equation one may expect various analytic aspects to be shared with the usual $L$-functions. In particular, one may expect a Lindel\"of Hypothesis to hold up to, and on, the critical line \cite{GS}. It is therefore of interest to see how the lack of Euler product affects large values of $\zeta(\tfrac12+it,\alpha)$ for individual\footnote{Much of the past literature \cite{alphamean1,alphamean5, alphamean2,alphamean3,alphamean8, alphamean4,alphamean7,alphamean6} on the Lindel\"of Hypothesis and mean values for $\zeta(s,\alpha)$ focuses on taking a mean-value over $\alpha$ for fixed $s$, a topic we do not pursue here.} irrational $\alpha$. 

Much work has gone into understanding the value distribution in the strip $1/2<\sigma<1$. Here, it is known that for all $\alpha$, $\zeta(s,\alpha)$ possesses a limiting distribution for large $t$ \cite{GL,GLbook,M} -- an aspect shared by the usual zeta function. An analogue of Voronin's famous universality theorem for $\zeta(s)$ is also known to hold here when $\alpha$ is rational or transcendental \cite{bagchi,gonekthesis}. Considerable effort has gone into aspects of this universality; see \cite{anderssonarxiv,univ1,GLbook,minealgirr,univ2} for a slice of the relevant literature. Unfortunately, universality of $\zeta(s,\alpha)$ for a fixed algebraic irrational $\alpha$ remains open, although interesting progress has been made recently \cite{minealgirr, SS} (see also \cite{anderssonarxiv} for discussions about this difficult problem).

On the critical line much less is known and the distribution is undetermined for irrational $\alpha$. However, some moments are accessible. A result of Rane \cite{Rane} states that for all $0<\alpha\leqs 1$, 
\begin{equation}\label{rane}
\int_0^T |\zeta(\tfrac12+it,\alpha)|^2dt
=
T\log T+T(c(\alpha)+\gamma-1-\log2\pi)+O(T^{1/2}\log T)
\end{equation}
where 
\[
c(\alpha)=\lim_{N\to\infty}\bigg(\sum_{n\leqs N}\frac{1}{n+\alpha}-\log N\bigg).
\]
See also \cite{Nar, Tao} for further improvements. Note that the above formula does not depend on any Diophantine properties of $\alpha$ and holds for  rationals and irrationals alike.  

Naturally, the fourth moment is more difficult. For rational $\alpha$, on utilising formula \eqref{rational hurwitz} the leading term in the fourth moment was given by the second author in \cite{Sahay} (see also \cite{anderssonarxiv}). Here it was shown that for $1\leqs a< q$ with $(a,q)=1$ and fixed $q\geqs 3$,
\begin{equation}\label{rational fourth}
\int_0^T |\zeta(\tfrac12+it,\tfrac{a}{q})|^4dt
\sim
\frac{T(\log T)^4}{2\pi^2q}\prod_{p|q}\Big(1-\frac{1}{p+1}\Big).
\end{equation}
This is smaller than the fourth moment of the Riemann zeta function 
which suggests a greater degree of cancellation. Furthermore, the main term here fluctuates very heavily depending on the denominator $q$, even for rationals that are close to each other. One may wonder then, with a view towards rational approximations of irrational $\alpha$, about the uniformity of this result with respect to $q$ and if the moments are \emph{much} smaller when $\alpha$ is irrational. Our main result shows that this is indeed the case. 

\begin{thm}\label{main thm} Suppose $0<\alpha<1$ is irrational with irrationality exponent $\mu(\alpha)<3$. Then for large $T$ we have 
\begin{equation*}
\int_T^{2T}|\zeta(\tfrac12+it,\alpha)|^4dt
\ll
T(\log T)^2.
\end{equation*}
\end{thm}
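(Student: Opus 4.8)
The plan is to adapt the classical approach to the fourth moment of $\zeta(s)$ via an approximate functional equation, but to exploit the fact that the Dirichlet coefficients $(n+\alpha)^{-1/2-it}$ are \emph{not} multiplicative and carry no arithmetic structure, which is precisely what should produce the smaller exponent $2$ in place of $4$. First I would record an approximate functional equation for $\zeta(\tfrac12+it,\alpha)$ valid for $t \asymp T$, expressing it (up to an acceptable error) as a Dirichlet polynomial $\sum_{n \leqs N} (n+\alpha)^{-1/2-it}$ with $N \asymp \sqrt{T}$, together with its dual sum weighted by the appropriate gamma factor from the Hurwitz functional equation. Squaring, $|\zeta(\tfrac12+it,\alpha)|^2$ becomes (essentially) a bilinear sum $\sum_{m,n \leqs N}\big((m+\alpha)(n+\alpha)\big)^{-1/2}\big((m+\alpha)/(n+\alpha)\big)^{-it}$ plus cross terms from the dual sum; the target $T(\log T)^2$ is the square of the natural size $\log T$ of the second moment, so morally we want to show $\int_T^{2T}|\zeta(\tfrac12+it,\alpha)|^4\,dt$ behaves like the square of \eqref{rane} with no enlargement.

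The core step is to open the fourth power as a quadruple sum and integrate in $t$ over $[T,2T]$. The diagonal-type contribution, where $(m+\alpha)(m'+\alpha) = (n+\alpha)(n'+\alpha)$, is where the Hurwitz case diverges sharply from $\zeta(s)$: for the Riemann zeta function this equation has $\asymp (\log)^{?}$-many solutions because of the divisor function, producing $(\log T)^4$; but for generic real $\alpha$ the shifted products $(m+\alpha)(m'+\alpha)$ are all distinct, so the ``diagonal'' degenerates to $\{m,m'\}=\{n,n'\}$, contributing only $\asymp T(\log T)^2$. The off-diagonal terms $t$-integrate to $\ll T/|\log\frac{(m+\alpha)(m'+\alpha)}{(n+\alpha)(n'+\alpha)}|$, and here is where the Diophantine hypothesis enters: I would bound $\big|\log\frac{(m+\alpha)(m'+\alpha)}{(n+\alpha)(n'+\alpha)}\big|$ from below in terms of how well $\alpha$ is approximated by rationals, using $\mu(\alpha)<3$ to control the number of ``near-collisions'' among the products $(m+\alpha)(m'+\alpha)$ with $m,m' \leqs N \asymp \sqrt T$. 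Concretely, $(m+\alpha)(m'+\alpha) - (n+\alpha)(n'+\alpha) = (mm'-nn') + \alpha(m+m'-n-n') + \alpha^2 \cdot 0$ — wait, more precisely it equals $(mm'-nn') + \alpha(m+m'-n-n')$, an integer plus $\alpha$ times an integer, so it is small only when a specific integer combination of size $\ll N^2 = T$ lies within $\ll$ (something) of a rational approximation to $\alpha$; the irrationality exponent bound then limits how often this happens and by how much, and one sums the resulting geometric-type series over the remaining (non-diagonal) quadruples.

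I would organize the off-diagonal estimate by first fixing the two integers $h = m+m'-n-n'$ and $\ell = mm' - nn'$, noting $h + \alpha^{-1}\ell$-type quantities control the separation, then counting quadruples $(m,m',n,n')$ with $m,m',n,n' \leqs N$ and prescribed $(h,\ell)$ — this is a shifted-divisor/Gauss-circle type count that is polynomially controlled — and finally summing $\sum_{(h,\ell)\neq(0,0)} (\text{count}) \cdot T / \|\text{separation}\|^{-1}$, where the series converges precisely because $\mu(\alpha) < 3$ keeps $\|k\alpha\| \gg k^{-2-\epsilon}$ for all integers $k \ll T$. The main obstacle will be this off-diagonal bookkeeping: getting a clean lower bound for the logarithmic factor uniformly over the full range $m,m',n,n' \leqs N$ — including the ``unbalanced'' regime where $m,m'$ are of very different sizes, where the naive estimate $\log(1+x)\asymp x$ degrades — and ensuring the count of quadruples with given $(h,\ell)$ does not overwhelm the Diophantine saving. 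A secondary technical point is handling the cross terms between the principal sum and the dual sum in the approximate functional equation, and the error terms therein, which should be routine once the main bilinear analysis is in place; I would also need the exponent $\mu(\alpha)<3$ rather than just $<4$ because the relevant integers range up to $N^2 \asymp T$ while we save only one factor of $T$ from the $t$-integration, so the Diophantine series $\sum_k 1/(k\|k\alpha\|)$ over $k \ll T$ must be $\ll (\log T)^{O(1)}$, which is exactly the threshold $\mu(\alpha)<3$ secures.
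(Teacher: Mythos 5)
Your skeleton matches the paper's: approximate functional equation, quadruple sum, diagonal degenerating to permutations because $\alpha$ is irrational, and off-diagonals controlled through the quantity $h_1+h_2\alpha$ with $h_1=mm'-nn'$, $h_2=m+m'-n-n'$. But there are two genuine gaps that would stop the argument from closing. First, the Diophantine input you invoke is not correct. You assert that $\mu(\alpha)<3$ forces $\sum_{k\leqs T}\tfrac{1}{k\norm{k\alpha}}\ll(\log T)^{O(1)}$. This is false: take $\alpha$ with convergent denominators $q_{j+1}\asymp q_j^{3/2}$, so $\mu(\alpha)=5/2<3$, yet $a_{j+1}\asymp q_j^{1/2}$; the single term $k=q_j$ contributes $\tfrac{1}{q_j\norm{q_j\alpha}}\asymp q_{j+1}/q_j\asymp q_j^{1/2}$, so the sum up to $N$ is $\gg N^{1/2}$, nowhere near polylogarithmic. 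What $\mu(\alpha)<3$ actually buys is Kruse's estimate $\sum_{h\leqs N}\norm{h\alpha}^{-1/2}\ll N$, and the exponent $1/2$ is forced by a careful balance in which the count of $(n_3,n_4)$ with prescribed $(h_1,h_2)$ contributes the other half-power of $\norm{h\alpha}$; your sketch does not isolate this trade-off, and the lemma you rely on does not hold.

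Second, you treat the dual sum $S_2=\chi(\tfrac12+it)\sum_{n\geqs 1}e(-n\alpha)n^{-1/2+it}w_t(n)$ as producing only "routine" cross-term bookkeeping. But $\int_T^{2T}|S_2|^4\,dt$ is a pure term, not a cross term, and here the diagonal $n_1n_2=n_3n_4$ runs over \emph{ordinary} integers, so it inherits full divisor structure: bounding trivially would give a contribution $\asymp T(\log T)^4$, destroying the result. The only reason this term is harmless is cancellation in the twist $e(-\alpha(n_1+n_2-n_3-n_4))$, which requires parametrising the solutions of $n_1n_2=n_3n_4$ and proving a bilinear exponential-sum bound of the shape $\sum_{h\leqs x}\sum_{k\leqs y}e(-\alpha dhk)\ll d(xy)^{1-\delta}$ under $\mu(\alpha)\leqs 3-\delta$. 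This is the second, independent place where the irrationality exponent enters, and your proposal contains no analogue of it. (A smaller point: the genuine cross terms such as $\int S_1^2\ol{S_1S_2}\Phi$ carry rapidly oscillating $\chi$-factors; the paper bounds them crudely by H\"older rather than evaluating them, which is precisely why only an upper bound and not an asymptotic is obtained. Your proposal does not engage with this structural limitation.)
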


We recall that the \emph{irrationality exponent} $\mu(\alpha)$ of a real number $\alpha$ is defined as the supremum of the set of real numbers $\mu$ for which there exist infinitely many rational numbers $p/q$ such that 
\[0<|\alpha-p/q|<1/q^\mu.\]
The irrationality exponent of rationals is $1$ whereas $\mu(\alpha)\geqs 2$ for irrational $\alpha$. A famous theorem of Roth \cite{roth} gives that $\mu(\alpha)=2$ for all algebraic irrationals whilst many transcendental numbers also have $\mu(\alpha)=2$. Indeed, in the Lebesgue sense almost all real numbers have $\mu(\alpha)=2$ and so, in particular, Theorem \ref{main thm} holds for generic irrational $\alpha$.
The Liouville numbers provide examples of reals with $\mu(\alpha)=\infty$ and there exist many\footnote{admittedly Lebesgue measure zero, but of full Hausdorff dimension.} so-called \emph{very well approximable numbers} satisfying $\mu(\alpha)>2$, see \cite{HV, LSV}. 


Since the fourth moment is proportional to the second moment squared, an application of H\"older's inequality allows one to immediately determine the order of all lower moments. 

\begin{cor}\label{main cor}Suppose $0<\alpha<1$ is irrational with $\mu(\alpha)<3$. Then for real $0\leqs k\leqs 2$ and large $T$ we have 
\begin{equation*}
\int_T^{2T}|\zeta(\tfrac12+it,\alpha)|^{2k}dt
\asymp
T(\log T)^k.
\end{equation*}
\end{cor}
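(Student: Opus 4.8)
The plan is to combine the upper bound from Theorem \ref{main thm} for $k = 2$, the lower bound coming from Rane's asymptotic \eqref{rane} for $k = 1$, and convexity (Hölder) and a reverse argument to interpolate across the full range $0 \leqs k \leqs 2$. First, for the \emph{upper bounds}: for $0 \leqs k \leqs 2$, write $|\zeta|^{2k} = (|\zeta|^4)^{k/2} \cdot 1^{1 - k/2}$ and apply Hölder's inequality with exponents $2/k$ and $2/(2-k)$ over $t \in [T, 2T]$. This gives
\begin{equation*}
\int_T^{2T} |\zeta(\tfrac12 + it, \alpha)|^{2k}\, dt \leqs \left( \int_T^{2T} |\zeta(\tfrac12 + it, \alpha)|^4\, dt \right)^{k/2} \left( \int_T^{2T} dt \right)^{1 - k/2} \ll \big( T (\log T)^2 \big)^{k/2} T^{1 - k/2} = T (\log T)^k,
\end{equation*}
using Theorem \ref{main thm}. (The endpoint cases $k = 0$ and $k = 2$ are trivial and Theorem \ref{main thm} itself, respectively.)

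For the \emph{lower bounds} the key input is that the second moment is genuinely of size $T \log T$: from \eqref{rane} applied at $T$ and $2T$ and differencing, $\int_T^{2T} |\zeta(\tfrac12+it,\alpha)|^2\, dt = T\log T + O(T)$, so in particular it is $\gg T \log T$. Now for $0 < k < 2$ I would interpolate the \emph{other way}, bounding the second moment above in terms of the $2k$th moment and the fourth moment. Writing $2 = \theta \cdot k + (1-\theta) \cdot 2$ with the convex combination $\theta \in (0,1)$ determined by $k$ (explicitly, one takes $|\zeta|^2 = |\zeta|^{2k \cdot \theta} \cdot |\zeta|^{4(1-\theta)}$ with $\theta k + 2(1-\theta) = 2$, i.e. $\theta = 2/(4-k)$... more carefully, choose $\theta$ so that $\theta k + (1-\theta)\cdot 2 = 1$ at the level of the exponent of $|\zeta|$, giving $\theta = 1/(2-k)$ when $0 \leqs k < 1$), Hölder's inequality yields
\begin{equation*}
\int_T^{2T} |\zeta|^2\, dt \leqs \left( \int_T^{2T} |\zeta|^{2k}\, dt \right)^{\theta} \left( \int_T^{2T} |\zeta|^{4}\, dt \right)^{1-\theta}.
\end{equation*}
Rearranging and inserting the lower bound $\int |\zeta|^2 \gg T\log T$ and the upper bound $\int |\zeta|^4 \ll T (\log T)^2$ gives
\begin{equation*}
\int_T^{2T} |\zeta|^{2k}\, dt \gg \frac{\left( T \log T \right)^{1/\theta}}{\left( T (\log T)^2 \right)^{(1-\theta)/\theta}} = T (\log T)^k
\end{equation*}
after simplification, for $0 \leqs k \leqs 1$. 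For $1 \leqs k \leqs 2$ one argues symmetrically, now using the second moment as the \emph{lower} endpoint and the fourth moment as an upper input: write $2k = \theta \cdot 2 \cdot (\text{something}) + \dots$; concretely, from $|\zeta|^{2k}$ one does \emph{not} need a lower bound via Hölder at all when $k \geqs 1$ — instead note $|\zeta|^{2k} = |\zeta|^{2(k-1)} |\zeta|^2 \geqs $ is not monotone, so instead interpolate $1 = \lambda k + (1-\lambda)\cdot 0$ is wrong; rather use that for $1 \leqs k \leqs 2$, Hölder with $|\zeta|^2 = (|\zeta|^{2k})^{1/k}(1)^{1-1/k}$ gives $\int |\zeta|^2 \leqs (\int |\zeta|^{2k})^{1/k} T^{1 - 1/k}$, whence $\int|\zeta|^{2k} \gg (T\log T)^k / T^{k-1} = T(\log T)^k$. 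This last one-line argument in fact covers \emph{all} of $0 \leqs k \leqs 1$ too? No — for $k < 1$ the exponent $1/k > 1$ makes the inequality go the wrong way, which is exactly why the three-term interpolation with the fourth moment is needed there. Combining the upper and lower bounds across the two ranges completes the proof.

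The main obstacle is purely bookkeeping: one must make sure that in the regime $0 \leqs k < 1$ the lower bound genuinely requires the fourth-moment upper bound (Theorem \ref{main thm}) and not merely the second-moment asymptotic, and that the Hölder exponents are chosen consistently so that the powers of $\log T$ match to give exactly $(\log T)^k$. There is no analytic difficulty beyond Theorem \ref{main thm} and \eqref{rane}; the corollary is a soft consequence, and the only care needed is to track which of the two moments plays the role of the "known" endpoint in each sub-range of $k$.
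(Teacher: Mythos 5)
Your argument is correct and is essentially identical to the paper's: Hölder for the upper bound, and for the lower bound, a two-case split at $k=1$ bounding the second moment from above by the $2k$th moment (together with the fourth moment when $0\leqs k<1$), then inserting Rane's asymptotic \eqref{rane} and Theorem~\ref{main thm}. The exposition has some mid-stream self-corrections on the value of $\theta$, but the final exponent $\theta=1/(2-k)$ and the resulting computation $T^{(2-k)-(1-k)}(\log T)^{(2-k)-2(1-k)}=T(\log T)^k$ are right, and the $1\leqs k\leqs 2$ case matches the paper's one-line Hölder step verbatim.
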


Theorem~\ref{main thm} and Corollary~\ref{main cor} may seem surprising from two perspectives. For those familiar with moment problems for $L$-functions from arithmetic, these results suggest that the Hurwitz zeta function is not log-normal, unlike the Riemann zeta function and other $L$-functions. A moment's reflection will convince the reader, however, that this is not too surprising: the Hurwitz zeta function has no Euler product, and the Euler product is crucially important for log-normality. On the other hand, those familiar with the Hurwitz zeta function may be surprised that these results are able to deal with all algebraic irrationals (as they have $\mu(\alpha) = 2$) but are not able to deal with all transcendental numbers (as those with $\mu(\alpha) \geqs 3$ are not covered by our theorem). This is in contrast to work on Voronin universality or zero distributions, where the rational and transcendental cases are much easier, while the algebraic irrational case is harder (and, indeed, often open; see, for example, \cite{anderssonarxiv,minealgirr, SS}). As we explain further at the end of this introduction, the reason behind our restriction on $\alpha$ is essentially that the off-diagonal terms are difficult to control with sufficient uniformity when $\alpha$ is very well approximable. 

Theorem~\ref{main thm} naturally raises the question of asymptotics. We have some speculations in this regard. For transcendental $\alpha$, the numbers $\log(n+\alpha)$ are linearly independent which suggests that the $(n+\alpha)^{-it}$ may act as independent random variables. This would entail Gaussian behaviour and accordingly one could expect the fourth moment to be $\sim 2 T(\log T)^2$, in view of Rane's result \eqref{rane}. One can interpret this expectation as saying that the leading term in the fourth moment arises from what are essentially diagonal terms in the approximate functional equation. Indeed, if the $(n+\alpha)^{-it}$ were purely orthogonal then we would expect 
\begin{equation}\label{naive asym}
\int_T^{2T}|\zeta(\tfrac12+it,\alpha)|^{4}dt
\sim T\sum_{\substack{(n_1+\alpha)(n_2+\alpha)=\\(n_3+\alpha)(n_4+\alpha)\\n_j\leqs T}}\frac{1}{\sqrt{(n_1+\alpha)(n_2+\alpha)(n_3+\alpha)(n_4+\alpha)}}.
\end{equation}
For irrational $\alpha$ the solutions of the given equation are simply the diagonals\footnote{Here we should distinguish between the ``harmonic" diagonals $(n_1+\alpha)(n_2+\alpha)=(n_3+\alpha)(n_4+\alpha)$ which lead to a non-oscillating sum and the ``Diophantine" diagonals which are the permuted solutions $\{n_1,n_2\} = \{n_3,n_4\}$ of this equation.} $n_1=n_3,\,n_2=n_4$ and $n_1=n_4,\,n_2=n_3$ which leads to the asymptotic 
\[
2T\bigg(\sum_{n\leqs T}\frac{1}{n+\alpha}\bigg)^2\sim 2T(\log T)^2.
\]

Similarly, for higher moments one may expect that
\begin{equation}\label{higher}
\int_T^{2T}|\zeta(\tfrac12+it,\alpha)|^{2k}dt
\sim T\sum_{\substack{\ul{n}\in D_k(T)}}\frac{1}{{\prod_{j=1}^{2k}(n_j+\alpha)^{1/2}}}
\end{equation}
where 
\[
D_k(T)
=
\bigg\{(n_j)_{j=1}^{2k}\in\mathbb{N}^{2k}: \prod_{i=1}^k(n_i+\alpha)=\prod_{j=k+1}^{2k}(n_j+\alpha),\,\,n_j\leqs T\bigg\}.
\]
For $\alpha$ transcendental or algebraic of degree $d\geqs k$ it is fairly easy to see that $D_k(T)$ consists of just permutations, giving $|D_k(T)|\sim k!\,T^k$. When $d<k$, there may exist solutions not arising from permutations, however, in \cite{HSW} (see also \cite{scoopers}) it was shown that these contribute $\ll T^{k-d+1+\epsilon}$ and so one retains this asymptotic for $|D_k(T)|$. The effect of these extra solutions on the weighted sum in \eqref{higher} is not clear, and the behaviour of the off-diagonals in higher moments is also not clear, especially with regards to dependence on $\mu(\alpha)$. Based on these considerations we make the following conjecture. 

\begin{conj} \label{main conj} Let $k\in\mathbb{N}$ and $0<\alpha\leqs 1$ be an irrational number. Then for algebraic $\alpha$ of degree $d\geqs k$ and transcendental $\alpha$ satisfying $\mu(\alpha)=2$ we have
\[
\int_T^{2T}|\zeta(\tfrac12+it,\alpha)|^{2k}dt
\sim
k!\, T(\log T)^k
\] 
as $T\to\infty$.
\end{conj}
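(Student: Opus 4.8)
The plan is to establish Conjecture~\ref{main conj} by a moment computation at the level of Dirichlet polynomials, isolating a diagonal main term of the predicted shape and showing that everything else is of strictly smaller order; the entire difficulty is concentrated in the off-diagonal estimate, which is why the conjecture is currently out of reach.

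\emph{Step 1 (set-up).} One begins with an approximate functional equation for $\zeta(\tfrac12+it,\alpha)$, writing it up to a negligible error as a main Dirichlet polynomial $\sum_{n+\alpha\ll\sqrt T}(n+\alpha)^{-1/2-it}$ plus a dual sum arising from the Hurwitz functional equation (which involves the periodic zeta function $\sum_{m\geqs 1}m^{-s}e^{2\pi i m\alpha}$). Expanding $|\zeta(\tfrac12+it,\alpha)|^{2k}$ produces a sum over $2k$-tuples $\ul n$, with each of the $2k$ factors drawn from one of the two pieces; integrating over $t\in[T,2T]$ then contributes a term of size $\asymp T$ exactly when the associated product identity $\prod_{i\in S}(n_i+\alpha)=\prod_{j\notin S}(n_j+\alpha)$ holds for the relevant subset $S$, and an oscillatory term otherwise.

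\emph{Step 2 (the diagonal).} The solved-identity configurations lead to the sum $T\sum_{\ul n\in D_k(\cdot)}\prod_j(n_j+\alpha)^{-1/2}$ of \eqref{higher}. Here the hypotheses on $\alpha$ enter decisively: by \cite{HSW,scoopers}, for $\alpha$ transcendental or algebraic of degree $d\geqs k$ the set $D_k$ consists solely of permutation solutions $\{n_i\}_{i\in S}=\{n_j\}_{j\notin S}$ (as multisets), so no ``Diophantine'' solutions beyond permutations occur and one need only sum over these, with $\sim k!$ orderings per unordered $k$-tuple and the weight collapsing to $\prod(n_i+\alpha)^{-1}$. The main polynomial alone therefore yields a main term $\asymp k!\,T(\log T)^k$ (up to a rational constant coming from the truncation at $\sqrt T$), and carrying out the analogous bookkeeping for the mixed configurations involving the dual sum — in the same spirit as the classical fourth-moment evaluation for $\zeta(s)$ — should assemble these into precisely $k!\,T(\log T)^k$. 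One must additionally check that the portions of the tuples with $n_j+\alpha$ beyond $\sqrt T$, and the tails of the approximate functional equation, are negligible.

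\emph{Step 3 (the off-diagonal — the main obstacle).} It remains to show each non-diagonal configuration contributes $o(T(\log T)^k)$. Put $m=\prod_{i\in S}(n_i+\alpha)$ and $n=\prod_{j\notin S}(n_j+\alpha)$ with $m\neq n$; the $t$-integral is $\ll\min(T,\,n/|m-n|)$, and $m-n=P_{\ul n}(\alpha)$ where $P_{\ul n}$ is a \emph{nonzero} integer polynomial of degree $\leqs k-1$ whose coefficients are differences of elementary symmetric functions of the $n_j$, hence of size $\ll T^{k/2}$. So one needs a lower bound for $|P_{\ul n}(\alpha)|$ together with a count of the tuples for which it is small. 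When $\alpha$ is algebraic of degree $d\geqs k$, $P_{\ul n}(\alpha)\neq 0$ is automatic (a nonzero polynomial of degree $<d$ cannot vanish at $\alpha$), and a Liouville/Baker-type inequality bounds $|P_{\ul n}(\alpha)|$ below in terms of the height of $P_{\ul n}$; when $\alpha$ is transcendental with $\mu(\alpha)=2$ one must instead descend to its continued fraction expansion, in the spirit of the argument behind Theorem~\ref{main thm}, to show the differences of symmetric functions cannot conspire to be simultaneously small. The obstruction — and the reason the conjecture is open even for $k=2$, where Theorem~\ref{main thm} and Corollary~\ref{main cor} already give the order $\asymp T(\log T)^2$ but not the constant — is the \emph{uniformity} of this step: Baker's theorem supplies only $|P_{\ul n}(\alpha)|\gg H(P_{\ul n})^{-C}$ with $C=C(\alpha,k)$ far too large to beat the trivial bound once it is summed against the $\asymp T^k$ available tuples, while the condition $\mu(\alpha)=2$ controls rational approximations to $\alpha$ alone, whereas here one truly needs quantitative control on the simultaneous rational approximation of $(\alpha,\alpha^2,\dots,\alpha^{k-1})$. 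Closing this gap seems to require either a genuinely stronger transcendence input (effective linear forms in logarithms with polynomial, rather than merely finite, dependence on heights) or a new averaging mechanism extracting cancellation from the sum over $\ul n$ collectively rather than tuple-by-tuple.
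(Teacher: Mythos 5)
The statement you are addressing is Conjecture~\ref{main conj}; the paper does not prove it (it is presented explicitly as open), so there is no proof of record to match yours against. What you have written is likewise not a proof: you say yourself in Step 3 that the key off-diagonal estimate is out of reach, so the proposal has an acknowledged, irreparable gap and should be read as a heuristic together with an analysis of obstructions. As such it tracks the paper's own justification closely: your Step 2 is essentially the discussion surrounding \eqref{higher}, including the appeal to \cite{HSW} to show that $D_k(T)$ consists only of permutation solutions when $\alpha$ is transcendental or algebraic of degree $d\geqs k$, and your observation that the critical off-diagonal quantity $\prod_{i\in S}(n_i+\alpha)-\prod_{j\notin S}(n_j+\alpha)$ is a nonzero integer polynomial in $\alpha$ of degree $\leqs k-1$ is the correct generalisation of the linear form $h_1+h_2\alpha$ that drives the proof of Theorem~\ref{main thm}.

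One point of your diagnosis is at odds with the paper, however. You assert that the reason the conjecture is open ``even for $k=2$'' is the lack of a sufficiently uniform lower bound for $|P_{\ul{n}}(\alpha)|$. For $k=2$ the polynomial is linear, $P_{\ul{n}}(\alpha)=h_1+h_2\alpha$, and the paper \emph{does} control these off-diagonals whenever $\mu(\alpha)<3$ (hence for every $\alpha$ with $\mu(\alpha)=2$): that is precisely the content of Propositions~\ref{S1 prop}, \ref{S2 prop} and \ref{S1S2 prop}, via Kruse's bound on $\sum_{h\leqs N}\normh^{-1/2}$. According to \S\ref{sec: issues}, what actually blocks the asymptotic at $k=2$ is the cross terms $\Re S_1^2\ol{S_1S_2}$, $\Re S_1^2\ol{S_2}^2$, $\Re S_1S_2\ol{S_2}^2$ in \eqref{decomposition}: these carry unimodular but rapidly oscillating $\chi$-factors, and extracting cancellation from them after stationary phase (or, equivalently, handling the longer sums in a Heath-Brown type approximate functional equation) with adequate uniformity in the Diophantine data is the open issue; the paper's H\"older step \eqref{wasteful holder} sidesteps this at the cost of losing the asymptotic. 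Your concern about simultaneous approximation of $(\alpha,\alpha^2,\dots,\alpha^{k-1})$ is a genuine additional obstruction, but it first arises at $k=3$. If you recast this as a discussion, you should separate the two obstructions: (i) the cross-term/$\chi$-factor problem, already present at $k=2$; and (ii) the higher-degree polynomial lower bounds, relevant only for $k\geqs 3$.
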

In particular, we expect the asymptotic to hold for almost all $\alpha$ in the measure-theoretic sense. If one makes an additional average in the $\alpha$-aspect (concretely, over $\alpha \in [1,2]$), then the above was conjectured by Andersson in an unpublished part of his thesis \cite{Athesis}, where he also proved his conjecture for $k = 2$. Thus, Theorem~\ref{main thm} can be seen as removing the averaging in Andersson's work, albeit at the cost of showing only an upper bound instead of an asymptotic. We discuss the difficulties in proving the $k = 2$ case of Conjecture~\ref{main conj} in \S\ref{sec: issues}.

Since Conjecture~\ref{main conj} speculates that $\zeta(s,\alpha)$ has the moments of a complex Gaussian with mean $0$ and variance $\log T$, we acquire the following conjecture regarding the distribution of $\zeta(s,\alpha)$ on the critical line.

\begin{conj} \label{Gauss conj} Let $S\subset\mathbb{C}$ be Borel. Then for large $T$ and almost all $0<\alpha<1$ we have
\[
\frac{1}{T}\mathrm{meas}\Big\{t\in[T,2T]: \frac{\zeta(\tfrac12+it,\alpha)}{\sqrt{\log T}}\in S\Big\}
\sim
\frac{1}{{2\pi}}\iint_S e^{-(x^2+y^2)/2}dxdy. 
\] 
\end{conj}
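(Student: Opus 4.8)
The plan is to prove Conjecture~\ref{Gauss conj} by the method of moments, reducing the distributional statement to asymptotics for the complex moments
\[
M_{a,b}(T,\alpha) := \frac{1}{T}\int_T^{2T} \zeta(\tfrac12+it,\alpha)^{a}\,\overline{\zeta(\tfrac12+it,\alpha)}^{\,b}\,dt, \qquad a,b \in \mathbb{Z}_{\geqs 0}.
\]
The measures $\mu_{T,\alpha}(S) = \tfrac1T\,\mathrm{meas}\{t\in[T,2T]: \zeta(\tfrac12+it,\alpha) \in \sqrt{\log T}\cdot S\}$ are probability measures on the plane with finite moments of every order; since the joint moments of their coordinate functions are fixed real-linear combinations of the $M_{a,b}$, and the bivariate Gaussian on the right of the conjecture is determined by its moments, the Fr\'echet--Shohat theorem reduces the conjecture to showing that for every fixed pair $(a,b)$, for almost all $\alpha$, the normalised moment $M_{a,b}(T,\alpha)/(\log T)^{(a+b)/2}$ tends as $T\to\infty$ to the corresponding moment of a mean-zero complex Gaussian of the appropriate variance --- equivalently $M_{a,b}(T,\alpha)=o((\log T)^{(a+b)/2})$ when $a\neq b$, and $M_{k,k}(T,\alpha)\sim k!\,(\log T)^{k}$, which is exactly Conjecture~\ref{main conj}. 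For the almost-all quantifier one uses that if each $(a,b)$ needs the asymptotic only on a full-measure set $A_{a,b}$ --- for instance the $\alpha$ with $\mu(\alpha)=2$, which are generic --- then $\bigcap_{a,b}A_{a,b}$ remains of full measure, so all moments converge simultaneously for almost every $\alpha$.

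The diagonal moments $a=b=k$ are the content of Conjecture~\ref{main conj}; Theorem~\ref{main thm} already yields the correct order $(\log T)^2$ for $k=2$, and the expected route to the asymptotic $k!(\log T)^k$ is the recipe of \S\ref{sec: issues}: an approximate functional equation writing $\zeta(\tfrac12+it,\alpha)$ as the Dirichlet polynomial $\sum_{n\leqs T}(n+\alpha)^{-1/2-it}$ plus a dual sum, followed by a diagonal/off-diagonal split in which the harmonic diagonal $\prod_{i\leqs k}(n_i+\alpha)=\prod_{j\leqs k}(m_j+\alpha)$ supplies the main term $k!(\log T)^k$ (as in \eqref{higher}) and the off-diagonal is of smaller order. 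The genuinely new ingredient is the mixed moments with $a\neq b$: here one opens $\zeta^{a}\bar\zeta^{b}$ into a multiple Dirichlet polynomial indexed by $(m_i)_{i\leqs a}$ and $(n_j)_{j\leqs b}$ and integrates in $t$, so that the integral is controlled by near-coincidences $\prod_{i}(m_i+\alpha)\approx\prod_{j}(n_j+\alpha)$. The key point is that when $a\neq b$ and $\alpha$ is transcendental (or algebraic of degree exceeding $\max(a,b)$) there are \emph{no exact} coincidences: the difference of the monic integer polynomials $\prod_i(x+m_i)$ and $\prod_j(x+n_j)$ is nonzero of degree $\max(a,b)$, hence does not vanish at $\alpha$. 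Thus the whole mixed moment is an off-diagonal quantity, bounded essentially by a weighted count of solutions of $0<\bigl|\prod_i(m_i+\alpha)-\prod_j(n_j+\alpha)\bigr|<\delta$ with weight $\asymp 1/\delta$ --- precisely the type of sum estimated, in the case $a=b=2$, inside the proof of Theorem~\ref{main thm} --- and one needs such a bound to be $o((\log T)^{(a+b)/2})$.

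The main obstacle is controlling these off-diagonal contributions with enough uniformity in the rational approximations of $\alpha$: this is exactly the difficulty flagged in \S\ref{sec: issues} and the reason Theorem~\ref{main thm} is stated only for $\mu(\alpha)<3$. The smallness of $\bigl|\log\tfrac{\prod(n_j+\alpha)}{\prod(m_i+\alpha)}\bigr|$ --- equivalently, how often $\prod_i(m_i+\alpha)$ and $\prod_j(n_j+\alpha)$ come anomalously close --- is governed by how well $\alpha$ can be approximated by rationals, so a very well approximable $\alpha$ can inflate the off-diagonal; for the $2k$-variable problem with $k\geqs 3$ one appears to need Diophantine input beyond the three-term and continued-fraction estimates that suffice for $k=2$, such as quantitative forms of the subspace theorem or sharp upper bounds for solutions of $\prod_i(m_i+\alpha)=\prod_j(n_j+\alpha)$ in boxes (in the spirit of \cite{HSW}). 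This is why Conjectures~\ref{main conj} and~\ref{Gauss conj} remain conjectural; by contrast, an \emph{averaged} version --- with an extra integration over $\alpha\in[1,2]$ carried throughout --- is within reach by Andersson's method \cite{Athesis} for $k=2$ and, plausibly, for all $k$, so the real task is to remove the $\alpha$-average and make every step effective for individual (almost every) $\alpha$. As an alternative to the moment method one could instead try to prove a Lindeberg-type central limit theorem directly for the Dirichlet polynomial $\sum_{n\leqs T}(n+\alpha)^{-1/2-it}$ --- the numbers $\log(n+\alpha)$ being $\mathbb{Q}$-linearly independent for transcendental $\alpha$, the terms $(n+\alpha)^{-it}$ should act like independent mean-zero variables with variances $1/(n+\alpha)$ summing to $\sim\log T$ and each individually negligible --- but quantifying this equidistribution again forces effective control tied to $\mu(\alpha)$, so the same obstruction reappears.
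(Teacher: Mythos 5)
The statement is a conjecture which the paper does not prove: it is presented as following, by the method of moments, from Conjecture~\ref{main conj}, which is exactly the reduction you carry out --- with the additional (and correct) observation that the absolute moments of Conjecture~\ref{main conj} alone do not determine the planar distribution, so one also needs the mixed moments $M_{a,b}$ with $a\neq b$ to be $o((\log T)^{(a+b)/2})$, a point the paper elides. Your account of why the required asymptotics remain open --- the off-diagonal near-coincidences $\prod_i(m_i+\alpha)\approx\prod_j(n_j+\alpha)$ and their sensitivity to the rational approximations of $\alpha$ --- matches the paper's own discussion in the introduction and in \S\ref{sec: issues}, so your proposal is an accurate and slightly more careful version of the paper's heuristic rather than a proof.
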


Thus, unlike the case for $\zeta(s,1) = \zeta(s)$, we expect $\zeta(s,\alpha)$ to be normal instead of log-normal for generic $\alpha$. It would be interesting to understand, at least conjecturally, the finer aspects of the distribution such as large deviations and maximal values. For the Riemann zeta function, the behaviour of the tails are unclear although it has been speculated that they remain (essentially) log-normal up to the maximum \cite{AHZ} which, through using various random models, has been conjectured \cite{FGH} to satisfy 
\[
\max_{t\in[T,2T]}|\zeta(\tfrac12+it)|=\exp((1+o(1))\sqrt{\tfrac12\log T\log\log T})
. 
\]
The random matrix theory and random Euler product models of \cite{FGH} do not appear to have any immediate analogues for the Hurwitz zeta function (this is certainly true of the latter model when $\alpha$ is irrational). For such questions, it may be more appropriate to model the $(n+\alpha)^{-it}$ by independent random variables (in a similar fashion to \cite{AHZ}) however the robustness of this model is unclear at the extremes. At any rate, given the distribution in Conjecture \ref{Gauss conj} it seems possible that the true maximum of the Hurwitz zeta function could be markedly smaller than that of the Riemann zeta function for generic $\alpha$; it would be interesting to make this precise.

There exist several cousins of the Hurwitz zeta function which have also been the subject of intensive study in their value distribution. 
Most notable are the dual to the Hurwitz zeta function, namely the periodic zeta function,
\[P(s,\lambda) = \sum_{n\geqs 1} \frac{e(\lambda n)}{n^s}, \]
and the more general Lerch zeta function 
\[
L(s,\alpha,\lambda)=\sum_{n\geqs 0}\frac{e(\lambda n)}{(n+\alpha)^s}
\] 
where as usual $e(x)=e^{2\pi i x}$ (for an overview, see \cite{GLbook}). Again, these are defined initially in $\Re(s) > 1$ and extend to functions on $\mathbb{C}$ that are holomorphic except for a possible simple pole at $s = 1$ which only occurs if $\lambda \in \Z$. The periodic zeta function $P(s,\alpha)$ is related to $\zeta(s,\alpha)$ by a functional equation; see \eqref{hurwitz func} and, as with $\zeta(s,\alpha)$, it can be related to $L$-functions when $\alpha$ is a rational with no such relation likely when $\alpha$ is irrational. Through similar considerations to the above, on the critical line one might expect Gaussian-like behaviour for these functions also. In fact, by a short argument with the functional equation, the $2k$th moment of $P(1/2+it,\alpha)$ is equal to that of $\zeta(1/2+it,\alpha)$ up to a negligible error. Theorem~\ref{main thm} thus also implies the order of magnitude for low moments of $P(s,\alpha)$.

\begin{cor}\label{main cor 2}Suppose $0<\alpha<1$ is irrational with $\mu(\alpha)<3$. Then for $0\leqs k\leqs 2$ and large $T$ we have 
\begin{equation*}
\int_T^{2T}|P(\tfrac12+it,\alpha)|^{2k}dt
\asymp
T(\log T)^k.
\end{equation*}
\end{cor}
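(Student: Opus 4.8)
The plan is to read off Corollary~\ref{main cor 2} from Corollary~\ref{main cor} using the functional equation \eqref{hurwitz func} that relates the periodic zeta function to the Hurwitz zeta function, taking advantage of the fact that on the critical line this relation \emph{degenerates}: one of the two terms on the right-hand side is exponentially larger than the other. Thus the first step would be to rearrange \eqref{hurwitz func} (replacing $\alpha$ by $1-\alpha$ and using $P(s,-\alpha)=P(s,1-\alpha)$ gives a $2\times2$ linear system for $P(s,\alpha)$ and $P(s,1-\alpha)$; inverting it, and applying the reflection formula for $\Gamma$) into an identity of the shape
\[
P(s,\alpha)=i(2\pi)^{s-1}\Gamma(1-s)\Bigl(e^{-\pi i s/2}\,\zeta(1-s,\alpha)-e^{\pi i s/2}\,\zeta(1-s,1-\alpha)\Bigr).
\]

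Next I would specialise to $s=\tfrac12+it$ with $t$ large and positive. By Stirling's formula $|\Gamma(\tfrac12-it)|$ decays like $e^{-\pi t/2}$, which exactly cancels the factor $|e^{-\pi i s/2}|=e^{\pi t/2}$ attached to the $\zeta(1-s,\alpha)$ term while leaving the $\zeta(1-s,1-\alpha)$ term weighted by $\asymp e^{-\pi t}$. More precisely, the modulus of the coefficient of $\zeta(\tfrac12-it,\alpha)$ equals $(1+e^{-2\pi t})^{-1/2}=1+O(e^{-2\pi t})$, and that of $\zeta(\tfrac12-it,1-\alpha)$ is $\ll e^{-\pi t}$. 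Using that $\zeta(\tfrac12-it,\beta)$ is of polynomial growth for $\beta\in\{\alpha,1-\alpha\}$ and that $|\zeta(\tfrac12-it,\alpha)|=|\zeta(\tfrac12+it,\alpha)|$ (because $\alpha$ is real), this produces the pointwise comparison
\[
\bigl|\,|P(\tfrac12+it,\alpha)|-|\zeta(\tfrac12+it,\alpha)|\,\bigr|\ll e^{-\pi t}\,(1+|t|)^{A}
\]
for some fixed $A$, uniformly for $t\in[T,2T]$ as $T\to\infty$.

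The corollary then follows by elementary estimates. For $0<k\leq2$ one uses $|x^{2k}-y^{2k}|\leq 2k\,\max(x,y)^{2k-1}\,|x-y|$ when $2k\geq1$ and $|x^{2k}-y^{2k}|\leq|x-y|^{2k}$ when $2k\leq1$ (for $x,y\geq0$); combined with the polynomial bounds $|P(\tfrac12+it,\alpha)|,\,|\zeta(\tfrac12+it,\alpha)|\ll(1+|t|)^{A}$ and the pointwise comparison above, this gives
\[
\int_T^{2T}\Bigl|\,|P(\tfrac12+it,\alpha)|^{2k}-|\zeta(\tfrac12+it,\alpha)|^{2k}\,\Bigr|\,dt\ll e^{-\pi T}\,T^{C}
\]
for a constant $C=C(k)$. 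Invoking Corollary~\ref{main cor} now yields
\[
\int_T^{2T}|P(\tfrac12+it,\alpha)|^{2k}\,dt=\int_T^{2T}|\zeta(\tfrac12+it,\alpha)|^{2k}\,dt+O(e^{-\pi T}T^{C})\asymp T(\log T)^{k},
\]
which is the claim for $0<k\leq2$; the case $k=0$ is trivial since both integrals equal $T$.

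I do not expect any genuine obstacle: the whole argument is a short piece of bookkeeping with the functional equation, and it requires no Diophantine input beyond what already goes into Corollary~\ref{main cor}. The one place that needs a moment's care is the Stirling computation identifying which term dominates on the critical line — this is exactly where the exponential decay of $\Gamma(\tfrac12-it)$ offsets one of the factors $e^{\pm\pi i s/2}$ — together with the elementary passage from a pointwise comparison of $|P|$ and $|\zeta|$ to a comparison of their $2k$th moments. Finally, keeping track of the constant shows the dominant coefficient is $1+O(e^{-2\pi t})$, so in fact the $2k$th moments of $P(\tfrac12+it,\alpha)$ and $\zeta(\tfrac12+it,\alpha)$ over $[T,2T]$ are asymptotically equal rather than merely of the same order; this refinement is not needed here but is consistent with the remark in the introduction.
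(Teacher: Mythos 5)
Your proposal is correct and takes essentially the same approach as the paper: both deduce from the Hurwitz functional equation a pointwise relation of the form $|P(\tfrac12+it,\alpha)| = |\zeta(\tfrac12+it,\alpha)| + O(e^{-ct}t^A)$ (the paper writes this as $\zeta(\tfrac12+it,\alpha)=\chi(\tfrac12+it)P(\tfrac12-it,-\alpha)+O(te^{-\pi t})$, using $|\chi(\tfrac12+it)|=1$ and $|P(\tfrac12-it,-\alpha)|=|P(\tfrac12+it,\alpha)|$), and then transfer this to the $2k$th moment before invoking Corollary~\ref{main cor}. The only cosmetic difference is in the bookkeeping step from the pointwise bound to the moment bound — you use the elementary inequalities $|x^p-y^p|\le p\max(x,y)^{p-1}|x-y|$ for $p\ge1$ and $|x^p-y^p|\le|x-y|^p$ for $p\le1$, while the paper splits off the exceptional set where $|\zeta(\tfrac12+it,\alpha)|$ is exponentially small — and both are valid.
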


Previously, the fourth moment of $P(s,\alpha)$ in the strip $1/2<\sigma<1$ was computed for \emph{all} irrational $\alpha$ by Laurin\v{c}ikas--\v{S}iau\v{c}i\=unas \cite{LS} where an asymptotic with leading term $c(\sigma,\alpha)T$ was given. The constant $c(\sigma,\alpha)$ implicitly contains some Diophantine information on $\alpha$, although the influence is mild. Clearly, the distinction between rational and irrational cases becomes more pronounced on the half-line (it follows from \eqref{rational fourth} and \eqref{h to p} that for rational $\alpha=a/q$ the fourth moment on the critical line is $\sim c_qT(\log T)^4$). 
It is likely that our methods also generalise to the Lerch zeta function under irrationality and Diophantine assumptions on either $\alpha$ or $\lambda$, in which case we would similarly expect moments of size $T(\log T)^k$ and a Gaussian distribution. Currently, the second moment is known for $1/2\leqs \sigma<1$ and all $0<\alpha,\lambda<1$ due to Garunk\v{s}tis--Laurin\v{c}ikas--Steuding \cite{GLS}.

We end this introduction with an explanation of the Diophantine assumptions on $\alpha$ in our results. Since the harmonics $(n+\alpha)^{-it}$ are not truly orthogonal at our scale, \eqref{naive asym} is an oversimplification. In particular, when dealing with the off-diagonal terms in the mean value
\[
\int_T^{2T}\bigg|\sum_{n\leqs T}\frac{1}{(n+\alpha)^{1/2+it}}\bigg|^4dt,
\]
it can happen that $(n_1 + \alpha)(n_2 + \alpha) \neq (n_3 + \alpha)(n_4 + \alpha)$ but that $(n_1 + \alpha)(n_2 + \alpha) - (n_3 + \alpha)(n_4 + \alpha)$ is still very close to zero. In the case of the usual zeta function ($\alpha=1$) this difference is at worst 1, and for $\alpha=a/q$ at worst $1/q$, but for irrational $\alpha$ it can be arbitrarily close to zero. Thus, there are potentially many off-diagonal terms with no oscillation, each term giving a contribution of size $\approx \int_T^{2T} 1dt=T$.
These worst cases occur when 
\[ \alpha \approx \frac{n_1 n_2 - n_3 n_4}{n_3 + n_4 - n_1 - n_2} \]
and so in order for them to not contribute a main term, we require $\alpha$ to not have too many good quality rational approximations. This naturally leads to assumptions on $\mu(\alpha)$. It is unclear whether the particular condition $\mu(\alpha) < 3$ is a technical limitation of our method or whether one should expect genuinely different behaviour for certain $\alpha$ which have larger irrationality exponent, $\mu(\alpha)=3$ or 4, say. 

When dealing with the off-diagonals, we will use our Diophantine assumptions on $\alpha$ to bound sums of reciprocals of fractional parts such as
\[ \sum_{n\leqs N} \frac{1}{\norm{n\alpha}^{\eta}},\]
for $0 < \eta < 1$ which will arise naturally upon throwing away some possible cancellation. The order of these sums was worked out comprehensively by Kruse \cite{K}, and this indicates that weakening or removing the assumption that $\mu(\alpha) < 3$ will require a different treatment that exploits this cancellation that we neglected.

\subsubsection*{Notation} We use aysmptotic notation $\ll, \gg, O(\cdot), o(\cdot), \sim,\asymp$ that is standard in analytic number theory. In particular, we use the Vinogradov notation $A\ll B$ interchangeably with the Bachmann-Landau notation $A = O(B)$ for the inequality $|A| \leqs CB$, where $C$ is some large unspecified constant and $B$ is positive. The quantities $A$ and $B$ will typically depend on some parameters and the range in which the inequalities hold should be clear from context. Asymptotic statements may involve an arbitrarily small parameter $\epsilon>0$, which may vary from occurrence to occurrence. To distinguish arbitrary epsilons from fixed ones, we shall use $\varepsilon$ for the latter in the proof of Lemma~\ref{expsum lem}. Implicit constants may depend on $\alpha$ throughout and $\epsilon$ wherever it appears, but will be uniform in other parameters unless specified via subscripts. The parameter 
\begin{equation*}
\tau:=\sqrt{t/2\pi}
\end{equation*}
will appear throughout the paper. Finally we use the standard notation for additive characters, $e(t) := e^{2\pi it}$.

\subsection*{Acknowledgements} We thank Steve Gonek and Trevor Wooley for useful discussions on these topics. AS is partially supported through Purdue University start-up funding available to Trevor Wooley.


\section{Preliminaries on Diophantine approximation} \label{sec:diophantine}


We first recall a few basic properties of continued fraction expansions. These can be found in many sources e.g. see \cite{Cassels, Lang}. Let \begin{equation*} \alpha = [a_0;a_1,a_2,\cdots] = a_0 + \cfrac{1}{a_1 + \cfrac{1}{a_2 + \cfrac{1}{\ddots}}},\end{equation*} 
be the continued fraction expansion of $\alpha$, and let
\begin{equation*} \frac{p_k}{q_k} = [a_0;a_1,\cdots,a_k],\end{equation*}
be the principal convergents of $\alpha$.  We call $a_k$ the partial quotients of $\alpha$. An induction gives the following recursions
\begin{equation*}\label{induction}
\begin{split}
 p_{k+1}=&a_{k+1}p_k+p_{k-1} 
\\
 q_{k+1}=&a_{k+1}q_k+q_{k-1} 
\end{split}
\end{equation*}
and note the second of these implies $a_{k+1}\leqs q_{k+1}/q_k$. 

A consequence of Dirichlet's approximation theorem is that for irrational $\alpha$ there exist infinitely many $p,q$ such that 
\begin{equation*}\label{Dirichlet}
\Big|\alpha-\frac{p}{q}\Big|<\frac{1}{q^2}. 
\end{equation*}
The principal convergents form a sequence of best possible approximations to $\alpha$ in the sense that $q_k$ is the smallest integer $q>q_{k-1}$ such that $\|q\alpha\|<\|q_{k-1}\alpha\|$ where here and throughout 
\[
\norm{x} = \min_{n\in\Z}|x-n|
\]
is the distance of $x$ to the nearest integer. They also satisfy the inequalities 
\[
\frac{1}{2q_kq_{k+1}}<\Big|\alpha-\frac{p_k}{q_k}\Big|<\frac{1}{q_kq_{k+1}} 
\]
from which the following formula for the irrationality exponent can be deduced
\begin{equation*}\label{irr exp}
\mu(\alpha)=1+\limsup_{k\to\infty}\frac{\log q_{k+1}}{\log q_k}.
\end{equation*}
See Theorem 1 of \cite{Sondow}. Note that our condition $\mu(\alpha)<3$ implies there exists some $\delta$ such that for large $q_k$,
\begin{equation}\label{assump ineq}
q_{k+1}\ll q_k^{2-\delta}.
\end{equation}
In other words, the denominators in our rational approximations do not grow too rapidly. This will be the form in which we make use of our condition $\mu(\alpha)<3$.




We now state our main lemma on sums of reciprocals of fractional parts. The order of these sums was worked out fairly comprehensively by Kruse \cite{K} although we can also recommend the memoir \cite{BHV} for an exposition of this topic and further interesting results including explicit bounds.  

\begin{lem} \label{kruse} Suppose $\alpha$ is irrational with $\mu(\alpha)<3$. Then, 
\begin{equation*} \sum_{h\leqs N} \frac{1}{\normh^{1/2}} \ll N \end{equation*}
\end{lem}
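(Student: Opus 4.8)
The plan is to bound $\sum_{h\leqs N}\norm{h\alpha}^{-1/2}$ by partitioning the summation range according to the continued fraction convergents of $\alpha$ and using the three-distance/gap structure of the points $h\alpha \bmod 1$. First I would fix the convergent $p_K/q_K$ with $q_K \leqs N < q_{K+1}$, so that by the best-approximation property, for every $h \leqs N$ with $h \neq 0$ we have the classical lower bound $\norm{h\alpha} \geqs \norm{q_{K-1}\alpha} \gg 1/q_K$ once $h < q_K$, and more generally $\norm{h\alpha} \gg 1/q_{j+1}$ when $q_j \leqs h < q_{j+1}$. I would break $[1,N]$ into the dyadic-type blocks $B_j = \{h : q_{j-1} \leqs h < q_j\}$ for $j \leqs K$, plus the final partial block $\{h: q_K \leqs h \leqs N\}$.

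The key step is the estimate for a single block. Within a block of length $\asymp q_j$, the points $\{h\alpha\}$ are roughly equidistributed at scale $1/q_j$: more precisely, among any $q_j$ consecutive integers $h$, the values $\norm{h\alpha}$ are spaced so that at most $O(1)$ of them lie in each dyadic window $[\,2^{-m-1}, 2^{-m}\,)$ with $2^{-m} \gg 1/q_j$, and none lie below $\gg 1/q_{j+1}$. This is a standard consequence of the fact that $q_j\alpha$ is within $1/q_{j+1}$ of an integer together with the pigeonhole/gap argument underlying the three-distance theorem. Summing $\norm{h\alpha}^{-1/2}$ over such a block then gives
\[
\sum_{h\in B_j}\frac{1}{\norm{h\alpha}^{1/2}} \ll \sum_{2^{-m}\gg 1/q_j} (2^{m})^{1/2} \cdot O(1) \ll q_j^{1/2} \cdot O\!\big(\tfrac{q_{j+1}}{q_j}\big)^{1/2}\!\!,
\]
where the last factor accounts for the finitely many terms (at most $O(q_{j+1}/q_j)$ of them) with $\norm{h\alpha}$ as small as $1/q_{j+1}$; a cleaner way to organize this is to say $\sum_{h \in B_j}\norm{h\alpha}^{-1/2} \ll q_j + q_{j+1}^{1/2}$, since the bulk terms contribute $\ll q_j$ and the finitely many small terms contribute $\ll (q_{j+1}/q_j)\cdot q_{j+1}^{1/2} \ll q_{j+1}$ at worst — here I would be careful to track constants. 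Summing over $j \leqs K$ and invoking \eqref{assump ineq} (so $q_{j+1} \ll q_j^{2-\delta}$, hence $q_{j+1}^{1/2} \ll q_j^{1-\delta/2}$ and the telescoping-type sum $\sum_j q_j$ is dominated by its last term $q_K \leqs N$) yields the bound $\ll N$. The final partial block $q_K \leqs h \leqs N < q_{K+1}$ is handled identically, using $\norm{h\alpha} \gg 1/q_{K+1}$ and $q_{K+1} \ll q_K^{2-\delta} \ll N^{2-\delta}$; even a trivial bound $\ll (N - q_K)\cdot q_{K+1}^{1/2} \ll N \cdot N^{1-\delta/2}$ is too weak, so here one genuinely needs the gap structure to see that only $O(q_{K+1}/q_K)$ terms are large, giving $\ll N$.

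The main obstacle is making the per-block gap estimate fully rigorous with uniform constants: one must show that in a range of length $L$ with $q_j \leqs L < q_{j+1}$, the number of $h$ with $\norm{h\alpha} \leqs \lambda$ is $\ll 1 + L\lambda + L/q_{j+1}\cdot(\text{something})$, uniformly in $\lambda$. This is exactly the content of Lemma~\ref{expsum lem}-type exponential-sum/discrepancy inputs or, more elementarily, of iterating the three-distance theorem; I expect the cleanest route is to prove directly that $\#\{h \leqs L : \norm{h\alpha} < \lambda\} \ll 1 + L\lambda + \lambda q_{j+1}$ for $L \leqs q_{j+1}$, then integrate over $\lambda$ by parts against $\lambda^{-1/2}$. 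Everything else — the dyadic decomposition, the appeal to $\mu(\alpha)<3$ via \eqref{assump ineq}, and the summation over $j$ — is routine once this counting bound is in hand. (In fact, since \cite{K} and \cite{BHV} establish sharp bounds for $\sum_{h\leqs N}\norm{h\alpha}^{-\eta}$ for all $\eta \in (0,1)$, one could alternatively just cite the relevant result there; but the self-contained argument above is short enough to include.)
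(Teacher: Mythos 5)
You take a genuinely different route from the paper. The paper's proof of this lemma is a one-line citation: Kruse \cite[Theorem~1, Eq.~73 at $t=1/2$]{K} gives the sharp two-sided estimate $\sum_{h\leqs N}\norm{h\alpha}^{-1/2}\asymp N+N^{1/2}a_{K+1}^{1/2}$ with $q_K\leqs N<q_{K+1}$, after which $a_{K+1}\leqs q_{K+1}/q_K\ll q_K\leqs N$ by \eqref{assump ineq} finishes it. Your proposal instead reconstructs the underlying continued-fraction/gap analysis from scratch. That is a legitimate alternative (and you note yourself at the end that citing \cite{K} or \cite{BHV} would suffice, which is precisely what the paper does), but several of the per-block details as written are not correct.

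The claim that among $q_j$ consecutive $h$ at most $O(1)$ of the values $\norm{h\alpha}$ fall in a dyadic window $[2^{-m-1},2^{-m})$ with $2^{-m}\gg 1/q_j$ is false: approximate equidistribution at scale $1/q_j$ puts $\asymp q_j 2^{-m}$ of them there, which is exactly why a full block contributes $\asymp q_j$ rather than $\asymp q_j^{1/2}$. Your displayed chain of inequalities is inconsistent on its own terms (under your stated per-window count the middle expression would be $\ll q_j^{1/2}$, not $q_j^{1/2}(q_{j+1}/q_j)^{1/2}$) and also with the subsequent ``cleaner'' bound $\ll q_j+q_{j+1}^{1/2}$. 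Moreover, for the interior blocks $B_j=[q_{j-1},q_j)$ no $q_{j+1}$ should appear at all: by the best-approximation property $\norm{h\alpha}\geqs\norm{q_{j-1}\alpha}\gg 1/q_j$ there, so each interior block simply contributes $\ll q_j$. The $q_{j+1}$-dependence, and hence the hypothesis $\mu(\alpha)<3$, enters only on the final partial block $[q_K,N]$, where the relevant fact is that the $h\leqs N<q_{K+1}$ with $\norm{h\alpha}<c/q_K$ are (essentially) the multiples of $q_K$, with $\norm{mq_K\alpha}\asymp m/q_{K+1}$, so their contribution is $\ll\sum_{m\leqs N/q_K}(m/q_{K+1})^{-1/2}\ll(Nq_{K+1}/q_K)^{1/2}\ll N^{1-\delta/2}$ using \eqref{assump ineq} and $q_K\leqs N$. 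Your proposed counting inequality $\#\{h\leqs L:\norm{h\alpha}<\lambda\}\ll 1+L\lambda+\lambda q_{j+1}$ is also not in a useful form, since for $L\leqs q_{j+1}$ the middle term is dominated by the last; what one wants is closer to $\ll 1+L\lambda+L/q_j$ for $q_j\leqs L<q_{j+1}$. With these repairs the self-contained argument goes through, but the paper's direct appeal to Kruse is the short route.
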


\begin{proof}

This follows from \cite[Theorem 1]{K}. Specifically, plugging $t = 1/2$ in \cite[Equation 73]{K}, and rewriting in our notation, one has
\begin{equation*} \sum_{h\leqs N} \frac{1}{\normh^{1/2}} \asymp N + N^{1/2} a_{K+1}^{1/2}, \end{equation*}
where $K=K(N)$ is the largest integer such that $q_K\leqs N<q_{K+1}$. But $a_{K+1}\leqs q_{K+1}/q_K\ll q_K\leqs N$ with the second inequality following by \eqref{assump ineq}.  The result then follows. 
\end{proof}

We will also be required to demonstrate cancellation in the bilinear sum
\begin{equation*}\sum_{h\leqs y}\sum_{k\leqs x}e(-\alpha d hk),\end{equation*}
where any power savings in both $x$ and $y$ will suffice. To this end, we have the following.

\begin{lem} \label{expsum lem}
Suppose $\alpha$ is irrational with $\mu(\alpha) \leqs 3-\delta$ for a given $\delta>0$. Then,
\begin{equation}\label{square root bound}\sum_{h\leqs x}\sum_{k\leqs y}e(-\alpha d hk) \ll d(xy)^{1 - \delta/5}.\end{equation}
uniformly in $d\geqs 1$. 
\end{lem}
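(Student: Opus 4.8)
The plan is to fix $k$ and execute the inner sum over $h$ as a geometric progression, picking up the standard bound $\min(x, \|\alpha d k\|^{-1})$, and then to bound $\sum_{k \leqs y} \min(x, \|\alpha d k\|^{-1})$ by splitting the range of $k$ into blocks on which $\alpha d k$ equidistributes. Concretely, writing $\beta = \alpha d$, one has
\begin{equation*}
\sum_{h \leqs x} e(-\beta h k) \ll \min\Bigl(x, \frac{1}{\|\beta k\|}\Bigr),
\end{equation*}
so the whole double sum is $\ll \sum_{k \leqs y} \min(x, \|\beta k\|^{-1})$. The classical way to treat this is to choose a rational approximation $a/q$ to $\beta$ with $|\beta - a/q| \leqs 1/q^2$ and $q$ in a suitable range, partition $\{1, \dots, y\}$ into $O(1 + y/q)$ consecutive blocks of length $q$, and use the standard estimate that on each such block $\sum \min(x, \|\beta k\|^{-1}) \ll x + q \log q$. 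This gives $\sum_{k \leqs y}\min(x,\|\beta k\|^{-1}) \ll (1 + y/q)(x + q\log q) \ll xy/q + x + y\log q + q\log q$. To make this a genuine power saving in $xy$ one wants $q$ to be roughly a small power of $xy$ from both sides: say $q \asymp (xy)^{\theta}$ for a small $\theta$ depending on $\delta$.

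The key point — and the place where the Diophantine hypothesis $\mu(\alpha) \leqs 3 - \delta$ enters — is that such a $q$ must actually exist for $\beta = \alpha d$ despite the extra factor $d$. Here I would use the theory of continued fractions recalled in Section~\ref{sec:diophantine}: let $q_K$ be the largest convergent denominator of $\alpha$ below some threshold $Q$; then $\|q_K \alpha\| < 1/q_{K+1}$, and by \eqref{assump ineq} we have $q_{K+1} \ll q_K^{2-\delta}$, so $q_K \gg Q^{1/(2-\delta)}$ up to constants depending on $\alpha$. Taking $q = q_K$ we get a denominator of size between $Q^{1/(2-\delta) - o(1)}$ and $Q$ with $\|q\alpha\| \ll 1/q$. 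Since $\|q \alpha d\| \leqs d\|q\alpha\| \ll d/q$, the same $q$ serves as a good approximation denominator for $\beta = \alpha d$ provided we absorb the factor $d$ — this is exactly where the $d$ on the right-hand side of \eqref{square root bound} comes from. Choosing $Q$ to be an appropriate small power of $xy$ (so that $q \asymp (xy)^{\theta}$ with, say, $\theta$ close to $1/2 \cdot 1/(2-\delta)$ shrunk a little to leave room), the four error terms $xy/q,\ x,\ y\log q,\ q\log q$ are each $\ll d(xy)^{1-\delta/5}$ after a short computation optimizing $\theta$ against $\delta$; the crude exponent $\delta/5$ is chosen with slack so one does not have to be careful.

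The main obstacle is the uniformity in $d$: one cannot simply apply a Diophantine property of the \emph{single} number $\alpha d$, because the irrationality exponent and the continued fraction of $\alpha d$ depend on $d$ in an uncontrolled way, and $d$ ranges over all integers $\geqs 1$ (and will in applications be as large as a small power of $T$). The resolution sketched above — approximate $\alpha$ itself and then multiply the approximation error by $d$, paying a factor $d$ — sidesteps this, but one must then be slightly careful that the chosen $q$ is not so large that $d/q$ fails to be $\leqs 1/(2q)$ (needed for the block estimate to be valid in the form stated); this forces the mild constraint that $d$ be at most a small power of $xy$, which is harmless since for larger $d$ the trivial bound $xy$ already beats $d(xy)^{1-\delta/5}$. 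A secondary bookkeeping issue is that the convergent denominators of $\alpha$ are a sparse sequence, so for a given target size $Q$ the best available $q_K$ may be as small as $Q^{1/(2-\delta)}$ rather than $\asymp Q$; this is precisely why the saving in the exponent degrades from a clean $1/4$-type bound to the weaker $\delta/5$, and why the hypothesis $\mu(\alpha) < 3$ (equivalently $\delta > 0$) cannot be dispensed with by this argument.
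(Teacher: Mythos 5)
Your route — reduce the inner $h$-sum to $\min(x,\|\alpha d k\|^{-1})$ by summing the geometric series, then bound $\sum_{k\leqs y}\min(x,\|\alpha dk\|^{-1})$ by the standard block estimate using a convergent denominator $q_K$ of $\alpha$ — is genuinely different in its tactics from the paper, which instead replaces $\alpha$ by the rational $p_\ell/q_\ell$ \emph{exactly} (paying $\ll d(xy)^{1-2\varepsilon}$), and then analyses the exact root-of-unity sum $\sum_{h,k}e(-dhkp_\ell/q_\ell)$ by splitting on whether $q_\ell\mid dh$ and bookkeeping $\gcd$'s. Both plans use the same Diophantine input (\eqref{assump ineq}, i.e.\ $q_{\ell}\ll q_{\ell-1}^{2-\delta}$). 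However, as sketched your argument has a genuine gap at the assertion that ``the four error terms $xy/q,\ x,\ y\log q,\ q\log q$ are each $\ll d(xy)^{1-\delta/5}$.'' The term $y\log q$ (the larger variable times $\log$) is not controllable when the smaller variable is small. Concretely, take $x\asymp 1$ and $d=1$: the target is $\ll(xy)^{1-\delta/5}\asymp y^{1-\delta/5}$, but $y\log q\gg y$. (Equivalently, for $x$ bounded the step $\sum_{h\leqs x}e(-\alpha dhk)\ll\min(x,\|\alpha dk\|^{-1})$ already forfeits the cancellation, and the block estimate can never recover more than a $\log$-type saving in the $k$-sum.) The paper's proof avoids this because, after passing to the exact rational with $x<q_\ell$, the sum over $h$ contributes $\ll q_\ell^{1+\varepsilon}\ll(xy)^{1-\delta/2+3\varepsilon}$ with \emph{no} factor of $y$, via the $\gcd$ argument over $g\mid q_\ell^*$.

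A second, more minor, gap is the phrase ``since $\|q\alpha d\|\leqs d\|q\alpha\|\ll d/q$, the same $q$ serves as a good approximation denominator for $\beta=\alpha d$ provided we absorb the factor $d$.'' The block estimate you invoke requires a coprime fraction $a/q'$ with $|\beta-a/q'|\leqs 1/(q')^2$; what you actually have is $|\beta-m/q_K|\ll d/q_K^2$ with $m$ possibly not coprime to $q_K$, and reducing $m/q_K$ to lowest terms changes both the denominator and the quality relative to it, so one cannot simply feed $q_K$ into the lemma. This can be repaired — e.g.\ by re-proving the block estimate under the weaker hypothesis $\|q\beta\|\ll d/q$ with an explicit loss of $d$, or by using Dirichlet's theorem for $\beta=\alpha d$ together with the chain $|\alpha d-a/q|=d|\alpha-a/(dq)|\gg d\,(dq)^{-(3-\delta)}$ to show the Dirichlet denominator cannot collapse too far below $Q^{1/(2-\delta)}/d$ — but as written the factor of $d$ on the right-hand side appears by fiat, and the restriction to $d\leqs(xy)^{\delta/5}$ (harmless, as you note) does not by itself resolve it.
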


\begin{proof}
In this proof, every occurrence of $\varepsilon>0$ is the same and sufficiently small. At the end, we will choose a value for $\varepsilon$. Without loss of generality we may assume $x\leqs y$. For $\varepsilon>0$ let $\ell=\ell(xy,\varepsilon)$ be the minimal integer such that $q_\ell>(xy)^{1/2+\varepsilon}$. We note that 
\[
x<(xy)^{1/2+\varepsilon}<q_\ell \ll q_{\ell-1}^{2-\delta}\ll (xy)^{1-\delta/2+2\varepsilon};
\]
by \eqref{assump ineq} and then the minimality of $\ell$. 

Now,
\[
|\alpha-\tfrac{p_\ell}{q_\ell}|<\tfrac{1}{q_\ell^2}<\tfrac{1}{(xy)^{1+2\varepsilon}}
\]
and hence 
\[
\sum_{h\leqs x}\sum_{k\leqs y}e(-\alpha d hk)
=
\sum_{h\leqs x}\sum_{k\leqs y}e(- d hk \tfrac{p_\ell}{q_\ell})+O(d(xy)^{1-2\varepsilon})
\]
since $|e(X)-1|\ll X$ uniformly in $X$. 
The latter sum may be decomposed as 
\begin{align*}
\bigg[\sum_{\substack{h\leqs x\\ q_{\ell}|dh}}
+
\sum_{\substack{h\leqs x\\ q_{\ell}\nmid dh}}\bigg]
\sum_{k\leqs y} e(- d hk \tfrac{p_\ell}{q_\ell})
\ll 
y\sum_{\substack{h\leqs x\\ q_{\ell}|dh}}1
+
\sum_{\substack{h\leqs x\\ q_{\ell}\nmid dh}}\frac{1}{\|dhp_\ell/q_\ell\|}.
\end{align*}
The first sum on the right is $\ll {dxy}/{q_\ell}\ll d(xy)^{1/2-\varepsilon}$. Letting $d^*=d/(d,q_\ell)$ and $q_\ell^*=q_\ell/(d,q_\ell)$ the second sum may be written as 
\[
\sum_{\substack{h\leqs x\\ q_{\ell}\nmid dh}}\frac{1}{\|h p_\ell d^*/q_\ell^*\|}
=
\sum_{g|q_\ell^*}\sum_{\substack{h\leqs x\\ (h,q_\ell^*)=g\\q_{\ell}\nmid dh}}\frac{1}{\|(h/g)\cdot \tfrac{p_\ell d^*}{q_\ell^*/g}\|}
\]
and as $h/g$ varies between 1 and $q_\ell^*/g$ we see that each congruence class modulo  $q_\ell^*/g$ is hit at most once by $(h/g)p_\ell d^*$ by coprimality. Therefore, since $x<q_\ell$, this sum is
\[
\ll
\sum_{g|q_\ell^*} \bigg[\mathds{1}_{x\leqs q_\ell^*/g}\sum_{1\leqs n\leqs x} \frac{1}{n/(q_\ell^*/g)}+ \mathds{1}_{x> q_\ell^*/g} \frac{x}{q_\ell^*/g}\sum_{1\leqs n\leqs q_\ell^*/g}\frac{1}{n/(q_\ell^*/g)}\bigg]
\ll
q_\ell d(q_\ell)\log q_\ell,
\]
which is $\ll q_\ell^{1+\varepsilon}\ll (xy)^{1 - \delta/2+3\varepsilon}$ (here, $d(n)$ is the divisor function). Thus, combining the above estimates,
\[
\sum_{h\leqs x}\sum_{k\leqs y}e(-\alpha d hk)
\ll
d(xy)^{1-2\varepsilon} + (xy)^{1-\delta/2 + 3\varepsilon}.
\]
Setting $\varepsilon = \delta/10$, we conclude the lemma.

\end{proof}

\section{Approximate Functional Equation}

In this section we derive a smoothed form of the basic approximate functional equation. Let 
\[\chi(s)=\pi^{1/2-s}\frac{\Gamma((1-s)/2)}{\Gamma(s/2)},\]
which is the factor appearing in the functional equation of the Riemann zeta function $\zeta(s)=\chi(s)\zeta(1-s)$. Note that $|\chi(\tfrac12+it)|=1$. 

\begin{lem}\label{afe lem}
Let $G(z)$ be an entire, even, function satisfying $|G(z)| \ll_C e^{-\Im(z)^2}$ for $\Re(z) \leqs C$ and $\ol{G(z)} = G(\ol{z})$. Then, for any $0< \alpha\leqs 1$ and $t\geqs 1$ we have 
\[
\zeta(\tfrac12+it,\alpha)
=
\sum_{m\geqs 0}\frac{w_t(m+\alpha)}{(m+\alpha)^{1/2+it}}+\chi(\tfrac12+it)\sum_{n\geqs 1}\frac{e(-n\alpha)}{n^{1/2-it}}w_{t}(n)+O(t^{-1/2})
\]
where
\begin{equation*}\label{weight}
w_t(x)
=
\frac{1}{2\pi i }\int_{1-i\infty}^{1+i\infty}({\tau}/{x})^{s}G(s)\frac{ds}{s}.
\end{equation*}
\end{lem}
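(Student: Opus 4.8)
The plan is to derive the approximate functional equation by contour integration applied to the Mellin transform of $G(s)/s$ against $\zeta(\tfrac12+it,\alpha)$, and then to invoke the functional equation relating the Hurwitz zeta function to the periodic zeta function in order to rewrite the resulting residual contour integral. First I would consider, for $\Re(s)$ large, the integral
\[
I = \frac{1}{2\pi i}\int_{(2)} \zeta(\tfrac12+it+s,\alpha)\, \tau^s\, G(s)\,\frac{ds}{s},
\]
where $\tau = \sqrt{t/2\pi}$ as in the notation. Expanding $\zeta(\tfrac12+it+s,\alpha) = \sum_{m\geqs 0}(m+\alpha)^{-1/2-it-s}$ term by term (justified by absolute convergence on the line $\Re(s)=2$) and interchanging sum and integral shows that $I = \sum_{m\geqs 0} w_t(m+\alpha)(m+\alpha)^{-1/2-it}$, which is exactly the first main-term sum in the statement, since $w_t(x) = \frac{1}{2\pi i}\int_{(1)}(\tau/x)^s G(s)\,ds/s$ (the line of integration can be shifted from $\Re(s)=2$ to $\Re(s)=1$ freely as the integrand is holomorphic there).

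Next I would move the contour in $I$ from $\Re(s)=2$ to $\Re(s)=-2$. Because $G$ is entire and decays like $e^{-\Im(s)^2}$ on vertical strips, the horizontal segments vanish and the shift is legitimate; the function $s\mapsto \zeta(\tfrac12+it+s,\alpha)$ is holomorphic in the strip (its only pole is at $s = \tfrac12-it$, which has $\Re(s)=\tfrac12 < 2$ but lies outside our contour shift provided $t\geqs 1$ so that $\tfrac12-it$ is not real-part $\le$ our left line — actually $\Re(\tfrac12-it)=\tfrac12$, so we cross it; however $G$ being even with $G(0)$ finite and the factor $1/s$ means the residue at the pole $s=\tfrac12-it$ of $\zeta$ contributes a term of size $\ll \tau^{1/2}|G(\tfrac12-it)|/|t| \ll t^{-1/2}e^{-t^2/4}$, absorbed into the error). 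We therefore pick up only the residue at $s=0$, which is $\zeta(\tfrac12+it,\alpha) G(0)$; I will normalize $G(0)=1$, or rather note that evenness of $G$ together with the standard device of using $G(s)+G(-s)$ symmetry makes the $s\to -s$ reflection clean. After the shift, substitute $s\mapsto -s$ in the remaining integral over $\Re(s)=-2$, giving
\[
\frac{1}{2\pi i}\int_{(2)} \zeta(\tfrac12-it+s,\alpha)\,\chi(\tfrac12+it)\cdot(\text{ratio of chi factors})\cdot \tau^{-s}G(s)\,\frac{ds}{s},
\]
where one uses the Hurwitz functional equation in the asymmetric form $\zeta(1-s,\alpha) = (\text{gamma factors})\big(e(-\alpha\cdot\!)\text{-twisted Dirichlet series}\big)$; concretely one writes $\chi(\tfrac12+it+s)/\chi(\tfrac12+it) = \tau^{-2s}(1+O(|s|^2/t))$ via Stirling, with the error feeding into the $O(t^{-1/2})$ after integrating against the rapidly decaying $G$.

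The hard part will be organizing the functional-equation bookkeeping so that the gamma-factor ratios collapse cleanly to powers of $\tau$ with controlled error, and making sure the pole of $\zeta(s,\alpha)$ at $s=1$ (which becomes $s = \tfrac12 - it$ in the shifted variable, or its reflection) genuinely contributes only something of size $O(t^{-1/2})$ rather than a main term — this requires the Gaussian decay of $G$ and the hypothesis $t \geqs 1$. Once that is in hand, expanding the twisted Dirichlet series $\sum_{n\geqs 1} e(-n\alpha) n^{-1/2+it}\cdot(\text{something})^{-s}$ term by term and recognizing the inner integral as $w_t(n)$ yields the second sum $\chi(\tfrac12+it)\sum_{n\geqs 1} e(-n\alpha) n^{-1/2+it} w_t(n)$, completing the identity. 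I would remark that the argument is entirely standard (cf. the treatment of the approximate functional equation for Dirichlet $L$-functions), the only Hurwitz-specific input being the functional equation $\zeta(s,\alpha)$ in terms of $P(s,\alpha)$ which I would cite as \eqref{hurwitz func}, and that the smoothing weight $G$ can be taken to satisfy $G(0)=1$ without loss of generality.
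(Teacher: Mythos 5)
Your approach is essentially the paper's: start from a smoothed contour integral against $G(s)/s$, shift the contour past $s=0$ (picking up the main term) and past the pole of $\zeta(\tfrac12+it+s,\alpha)$ at $s=\tfrac12-it$ (correctly noted to be exponentially small by the Gaussian decay of $G$), substitute $s\mapsto -s$, apply the Hurwitz functional equation, and use Stirling's formula to replace $\tau^{-s}\chi(\tfrac12+it-s)$ by $\tau^{s}\chi(\tfrac12+it)$ up to an $O((1+|s|^{2})/t)$ error feeding into the $O(t^{-1/2})$. You also correctly observe the implicit normalization $G(0)=1$.

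The one place where the sketch has a genuine gap is the functional-equation step. You invoke it ``in the asymmetric form $\zeta(1-s,\alpha)=(\text{gamma factors})\cdot(\text{$e(-n\alpha)$-twisted series})$,'' but no such clean one-term identity exists: \eqref{hurwitz func} necessarily involves \emph{both} $P(z,\alpha)$ and $P(z,-\alpha)$, weighted by $e^{\mp\pi i z/2}/\!\cos(\tfrac{\pi z}{2})$. With $z=\tfrac12-it+s$ on $\Re(s)=1$ and $t\geqs 1$, one of these weights equals $1+O(e^{-\pi t+\pi|s|})$ while the other is $O(e^{-\pi t+\pi|s|})$, and only after integrating the small piece against the rapidly decaying kernel $G$ (so that $e^{\pi|s|}$ is absorbed) does one reduce to the single $P(\cdot,-\alpha)$ term with a negligible error, as carried out around \eqref{periodic zeta bound} in the paper. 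Your sketch treats this collapse to one periodic zeta function as part of the functional equation itself, which glosses over a step that genuinely requires the Gaussian decay hypothesis on $G$ and is precisely the asymmetry ``in $t$'' that must be earned rather than assumed.
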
 
\begin{proof}
We start from the contour integral 
\[
I=\frac{1}{2\pi i }\int_{1-i\infty}^{1+i\infty}\zeta(\tfrac12+it+s,\alpha)\tau^s G(s)\frac{ds}{s}.
\]
Shifting the contour to $\Re(s)=-1$ we pick up poles at $s=0$ and $s=-1/2-it$.  Due to the rapid decay of $G(s)$, the latter pole contributes $O(e^{-At})$. Thus,
\begin{equation*}
I
=
\zeta(\tfrac12+it,\alpha)
+
\frac{1}{2\pi i }\int_{-1-i\infty}^{-1+i\infty}\zeta(\tfrac12+it+s,\alpha) \tau^s G(s)\frac{ds}{s}+O(e^{-At}).
\end{equation*}

	In this last integral we let $s\mapsto -s$ and  apply the functional equation for the Hurwitz zeta function \cite[Equation~2]{KR} in the form 
\begin{equation}\label{hurwitz func}
\zeta(1-z,\alpha)= \frac{\chi(1-z)}{2\cos(\tfrac{\pi z}{2})}\Big[e^{-\frac{\pi iz}{2}} P(z,\alpha) + e^{\frac{\pi iz}{2}} P(z,-\alpha)\Big]
\end{equation}
where $P(s,\alpha)$ is the periodic zeta function from the introduction, given by
\begin{equation*} P(s,\alpha) = \sum_{n\geqs 1} \frac{e(n\alpha)}{n^s}, \end{equation*}
when $\Re(s) > 1$. The integral then becomes 

\begin{equation*}
\frac{1}{2\pi i }\int_{1-i\infty}^{1+i\infty}\chi(\tfrac12 + it - s) Q(t,s,\alpha) \tau^{-s} G(s)\frac{ds}{s},
\end{equation*}
where $Q(t,s,\alpha)$ is given by
\begin{equation*}
\frac{\exp(-\tfrac{\pi}{2}(\tfrac{i}{2} + t + is)) P(\tfrac{1}{2} -it + s,\alpha) + \exp(\tfrac{\pi}{2}(\tfrac{i}{2} + t + is)) P(\tfrac{1}{2} - it + s,-\alpha)}{{2\cos(\tfrac{\pi}{2}(\tfrac{1}{2}-it+s))}}. \end{equation*}
 
 Now, for large $t$ and $\Re(s) = 1$, we see that
\begin{equation}\label{periodic zeta bound}
Q(t,s,\alpha) = P(\tfrac{1}{2} - it + s,-\alpha) + O(e^{-\pi t + \pi |s|})\end{equation}
since $P(\tfrac12-it+s,\pm\alpha)$ is bounded in this region and 
\[
\frac{\exp(-\tfrac{\pi}{2}(\tfrac{i}{2} + t + is))}{2\cos(\tfrac{\pi}{2}(\tfrac{1}{2}-it+s))}
\ll e^{-\pi t + \pi |s|},
\qquad 
\frac{\exp(\tfrac{\pi}{2}(\tfrac{i}{2} + t + is))}{2\cos(\tfrac{\pi}{2}(\tfrac{1}{2}-it+s))}
=1+O( e^{-\pi t + \pi |s|})
\]
uniformly in $\Im(s)$. Due to the rapid decay of $G(s)$ and since $\chi(\tfrac{1}{2} +it -s) = O(|s|)$ on $\Re(s) = 1$, the error term of \eqref{periodic zeta bound} contributes $\ll e^{-At}$. 

Next, using Stirling's formula, we truncate the integral at height $\Im(s)=\pm C\sqrt{t}$ for some sufficiently large $C$ at the cost of an error $\ll e^{-At}$. Then, using Stirling's formula again,
\begin{equation*}
\tau^{-s}\chi(\tfrac12+it-s)=\tau^s\chi(\tfrac12+it)(1+O(\tfrac {1+|s|^2}{t})).
\end{equation*}
The error term here contributes 
\[
\ll t^{-1/2}\sum_{m\geqs 1}\frac{1}{m^{3/2}}\ll t^{-1/2}
\]
and re-extending the integrals also gives an acceptable error. 

Putting this together, we find
\begin{multline*}
\zeta(\tfrac12+it,\alpha)
=
\frac{1}{2\pi i }\int_{1-i\infty}^{1+i\infty}\Big\{\zeta(\tfrac12+it+s,\alpha)+\\\chi(\tfrac12+it)P(\tfrac12 - it + s,-\alpha)\Big\}\tau^s G(s)\frac{ds}{s} + O(t^{-1/2})
\end{multline*}
We can now unfold the Hurwitz zeta function and the periodic zeta function as generalised Dirichlet series and interchange the order of integration and summation to conclude the lemma. 

\end{proof}


Note that the proof above does not require $\ol{G(z)} = G(\ol{z})$. However, making this assumption ensures that the weight $w_t(x)$ is real, which will be used crucially in the sequel. The prototypical choice for the kernel is $G(z) = e^{z^2}$.

By shifting the line of integration in the weights $w_t(x)$ to $\Re(s)=\pm A$ for some $A>1$, we find that 
\begin{equation} \label{weight bound}
w_t(x)
=
\begin{cases}
1+O((x/\sqrt{t})^A) & \text{ if } x\leqs \tau
\\
O((\sqrt{t}/x)^A) & \text{ if } x> \tau.
\end{cases}
\end{equation}
Accordingly, we may restrict the sums in Lemma~\ref{afe lem} to $m,n\leqs T^{1/2+\epsilon}$ if necessary.  In a similar way we find that 
\begin{equation}\label{weight diff}
t^j\frac{d^j}{dt^j}w_t(x)\ll \min(x/\sqrt{t},\sqrt{t}/x)^A.
\end{equation}
These estimates will be used throughout.


\section{Statement of Main Propositions and Proofs of Theorem~\ref{main thm}, Corollary~\ref{main cor}, and Corollary~\ref{main cor 2}}

In the rest of the paper, the majority of work will be in proving the following two fourth moment asymptotics. 

\begin{prop}\label{S1 prop} Let $\Phi(t)$ be a smooth non-negative function of compact support in $[1/2,5/2]$ with derivatives satisfying $\Phi^{(j)}(t)\ll_j T^\epsilon$ for all fixed $j\geqs 0$. Then for large $T$ and irrational $\alpha$ satisfying $\mu(\alpha)<3$, we have 
 \[
 \int_\mathbb{R}\bigg|\sum_{n\geqs 0}\frac{w_t(n+\alpha)}{(n+\alpha)^{1/2+it}}\bigg|^4\Phi(t/T)dt
 =
\frac{\hat{\Phi}(0)}{2}T(\log T)^2+O(T(\log T)^{5/3}).
 \]
 \end{prop}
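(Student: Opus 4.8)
The plan is to expand the fourth power into a quadruple sum and evaluate the resulting diagonal and off-diagonal contributions using the time integral $\int_{\R} (\text{something})^{it} \Phi(t/T)\,dt$ as an approximate delta function. Writing $\mathcal{D}(t) = \sum_{n\geqs 0} w_t(n+\alpha)(n+\alpha)^{-1/2-it}$, the fourth moment is
\[
\int_{\R} |\mathcal{D}(t)|^4 \Phi(t/T)\,dt
=
\sum_{n_1,n_2,n_3,n_4 \geqs 0}
\frac{1}{\sqrt{(n_1+\alpha)(n_2+\alpha)(n_3+\alpha)(n_4+\alpha)}}
\int_{\R} W_t \left( \frac{(n_3+\alpha)(n_4+\alpha)}{(n_1+\alpha)(n_2+\alpha)} \right)^{it} \Phi(t/T)\,dt,
\]
where $W_t$ collects the four weights $w_t(n_i+\alpha)$, which by \eqref{weight bound} restrict all $n_j \ll \tau \asymp \sqrt{T}$ up to negligible error, so in particular all the sums are effectively finite. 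Since the $w_t$ are smooth and slowly varying in $t$ by \eqref{weight diff}, repeated integration by parts in $t$ shows that the time integral is negligible unless $\bigl|(n_1+\alpha)(n_2+\alpha) - (n_3+\alpha)(n_4+\alpha)\bigr|$ is smaller than roughly $T^{\epsilon} \cdot \frac{(n_1+\alpha)(n_2+\alpha)}{T}$; more precisely the integral is $\ll_A T \bigl(1 + T|\log \text{ratio}|\bigr)^{-A}$ for any $A$.

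The main term comes from the \emph{harmonic diagonal} $(n_1+\alpha)(n_2+\alpha) = (n_3+\alpha)(n_4+\alpha)$. For irrational $\alpha$ this forces $\{n_1,n_2\} = \{n_3,n_4\}$ (the footnote's ``Diophantine diagonal''), so the contribution is exactly
\[
\Bigl( 2 \sum_{n_1,n_2} \frac{w_t(n_1+\alpha)^2 w_t(n_2+\alpha)^2}{(n_1+\alpha)(n_2+\alpha)} - \sum_n \frac{w_t(n+\alpha)^4}{(n+\alpha)^2} \Bigr)
\]
integrated against $\Phi(t/T)$; using $w_t(x) = 1 + O((x/\sqrt t)^A)$ for $x \leqs \tau$ and $\sum_{n \leqs \tau} (n+\alpha)^{-1} = \tfrac12 \log T + O(1)$, this evaluates to $\tfrac12 \hat\Phi(0) T (\log T)^2 + O(T \log T)$ after doing the $t$-integral, which handles the diagonal cleanly and gives the claimed main term. (One should be slightly careful that $\tau$ depends on $t$, so the cutoff is soft; this is absorbed using \eqref{weight bound}.)

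The crux is the off-diagonal estimate: bounding the contribution of tuples with $\delta := (n_1+\alpha)(n_2+\alpha) - (n_3+\alpha)(n_4+\alpha) \neq 0$ by $O(T(\log T)^{5/3})$. Here one uses the decay of the time integral to restrict to $|\delta| \ll T^{-1+\epsilon}(n_1+\alpha)(n_2+\alpha)$, i.e. $|\delta| \ll T^{\epsilon}$ since the $n_j \ll \sqrt T$. Writing $\delta$ out, $\delta = (n_1 n_2 - n_3 n_4) + \alpha(n_1 + n_2 - n_3 - n_4)$, so for each fixed value of $\delta$ and each fixed value of $h := n_1 + n_2 - n_3 - n_4$ the integer part $n_1 n_2 - n_3 n_4$ is pinned down, which means we are counting solutions with $n_1 + n_2 - n_3 - n_4 = h$ and $n_1 n_2 - n_3 n_4$ in a short interval; crucially, the admissible $h$ must satisfy $\| h\alpha \|$ small (of size $\ll T^{\epsilon}$ times a power of $T^{-1/2}$), which is where Lemma~\ref{kruse} — the bound $\sum_{h \leqs N} \|h\alpha\|^{-1/2} \ll N$, valid precisely because $\mu(\alpha) < 3$ — enters to control the number of such $h$ weighted by how close $h\alpha$ is to an integer. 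After fixing $h$ and $\delta$, the remaining count over $n_3, n_4$ (with $n_1, n_2$ determined up to the factorization $n_1 + n_2 = n_3 + n_4 + h$, $n_1 n_2 = n_3 n_4 + (\text{something})$) is handled by divisor-function bounds, and the bilinear exponential sum Lemma~\ref{expsum lem} is used to extract extra cancellation from the residual oscillation $e(-\alpha d h k)$-type sums that survive when $\delta$ is not forced to be tiny. Assembling these pieces with the weight $(n_1+\alpha)\cdots(n_4+\alpha))^{-1/2}$ and summing over $|\delta| \ll T^{\epsilon}$, $h$, and the divisor variables should yield $O(T(\log T)^{5/3})$; the exponent $5/3$ (rather than a clean power of $\log$) presumably reflects a trade-off in optimizing a parameter separating ``very small $\delta$'' from ``moderately small $\delta$'' ranges. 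I expect this off-diagonal analysis — in particular getting the $\|h\alpha\|$-sums and the divisor sums to combine with enough uniformity — to be the main obstacle, and it is exactly the step that forces the restriction $\mu(\alpha) < 3$.
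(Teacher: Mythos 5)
Your overall skeleton (expand the fourth power, isolate the harmonic diagonal which forces $\{n_1,n_2\}=\{n_3,n_4\}$ for irrational $\alpha$, evaluate it to $\tfrac12\hat\Phi(0)T(\log T)^2$, and restrict the off-diagonal to $|\delta|\ll T^{-1+\epsilon}(n_3+\alpha)(n_4+\alpha)$ via integration by parts) matches the paper, and your diagonal computation is correct. But the off-diagonal treatment has two real gaps and one misattribution.

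First, the misattribution: Lemma~\ref{expsum lem} plays no role in the proof of Proposition~\ref{S1 prop}. It is used only in Proposition~\ref{S2 prop}, where the explicit $e(-n\alpha)$ factors produce a genuine bilinear sum $\sum_h\sum_k e(-\alpha dhk)$ after reparametrizing the harmonic diagonal $n_1n_2=n_3n_4$. No such $e(-\alpha dhk)$ structure appears in the $S_1$ off-diagonals; the only Diophantine input there is Lemma~\ref{kruse}.

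Second, and more seriously, you do not address the singularity at $h_1+h_2\alpha=0$. After Taylor-expanding the phase and integrating over $t$, the natural bound produces a factor $1/(h_1+h_2\alpha)$ with $h_1=n_1n_2-n_3n_4$, $h_2=n_1+n_2-n_3-n_4$; since $\alpha$ is irrational this denominator can be arbitrarily small, and directly plugging it in would force a sum $\sum_h\normh^{-1}$, which diverges too fast. The paper dispatches this by a compensating integration by parts: since $\int_\R \frac{d}{dt}(\Phi(t/T)\wit)\,dt=0$, one may replace $\exp\bigl(it\frac{h_1+h_2\alpha}{(n_3+\alpha)(n_4+\alpha)}\bigr)$ by $\exp\bigl(it\frac{h_1+h_2\alpha}{(n_3+\alpha)(n_4+\alpha)}\bigr)-1$, and the crucial estimate \eqref{exp bound}, $\exp(itx)-1\ll\min(1,T|x|)$, makes the singularity removable. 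Without this step the argument does not close.

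Third, you gloss over the parametrization that makes the count feasible. The paper sets $k=n_3-n_1$, derives $k\mid n_3h_2-h_1$, extracts $g=(k,h_2)$, and then splits into three sign/size cases for $h_1,h_2$. Only the third case (opposite signs with $|h_1+h_2\alpha|<1/(2g)$, forcing $h_1=\near{h_2}$ and $|h_1+h_2\alpha|=\norm{h_2\alpha}$) needs Lemma~\ref{kruse}, and the exponent $5/3$ comes concretely from splitting that sum at $n\leqs(gt\normh)^{1/2}(\log t)^{1/3}$ and applying \eqref{exp bound} to each range — not from an abstract optimization as you speculate. As written, your off-diagonal argument is a plausible roadmap but would not produce a proof without these three ingredients.
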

 \begin{prop}\label{S2 prop} Under the same conditions we have   
 \[
 \int_\mathbb{R}\bigg|\sum_{n\geqs 1}\frac{e(-n\alpha)}{n^{1/2-it}}w_t(n)\bigg|^4\Phi(t/T)dt
 =
\frac{\hat{\Phi}(0)}{2}T(\log T)^2+O(T\log T).
 \]
 \end{prop}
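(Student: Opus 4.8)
The plan is to expand the fourth moment into a sum over quadruples $(n_1,n_2,n_3,n_4)$ and integrate in $t$, separating the diagonal contribution (which produces the main term) from the off-diagonal contribution (which the Diophantine hypothesis must control). Concretely, writing $F(t) = \sum_{n\geqs 1} e(-n\alpha) n^{-1/2+it} w_t(n)$, we have
\[
\int_{\R} |F(t)|^4 \Phi(t/T)\,dt = \sum_{n_1,n_2,n_3,n_4\geqs 1} \frac{e(\alpha(n_3+n_4-n_1-n_2))}{(n_1 n_2 n_3 n_4)^{1/2}} \int_{\R} \bigg(\frac{n_1 n_2}{n_3 n_4}\bigg)^{it} w_t(n_1)w_t(n_2)w_t(n_3)w_t(n_4)\Phi(t/T)\,dt,
\]
where by \eqref{weight bound} the sums may be truncated to $n_j \leqs T^{1/2+\epsilon}$. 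The $t$-integral is $\ll_A T$ when $n_1 n_2 = n_3 n_4$ (the harmonic diagonal) and, by repeated integration by parts using \eqref{weight diff}, is $\ll_A T (T |\log(n_1 n_2/n_3 n_4)|)^{-A}$ otherwise; since $n_j \leqs T^{1/2+\epsilon}$, whenever $n_1 n_2 \neq n_3 n_4$ we have $|\log(n_1n_2/n_3n_4)| \gg (n_1n_2n_3n_4)^{-1} \gg T^{-2-\epsilon}$, so the off-diagonal $t$-integral is negligible, $\ll_A T^{-100}$ say, after crudely bounding the number of quadruples. Thus, unlike in Proposition~\ref{S1 prop}, there is effectively \emph{no} off-diagonal issue here: the phase $e(\alpha(\cdots))$ plays no role because the harmonic diagonal $n_1n_2 = n_3n_4$ already forces $n_3+n_4-n_1-n_2$ to run over a restricted set, and in any case all truly off-diagonal terms are killed by the $t$-integral. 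The Diophantine input and the exponential-sum lemmas are needed only for Proposition~\ref{S1 prop}; here $\mu(\alpha)<3$ is not used in an essential way.

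Therefore the main term comes entirely from $n_1 n_2 = n_3 n_4$. On this locus the phase becomes $e(\alpha(n_3+n_4-n_1-n_2))$, which is \emph{not} identically $1$, so I cannot simply sum it away; however, I expect the dominant contribution to come from the further ``Diophantine diagonal'' $\{n_1,n_2\}=\{n_3,n_4\}$ where the phase is exactly $1$, with the remaining harmonic-but-not-Diophantine solutions (parametrised by $n_1 = ad, n_2 = bc, n_3 = ac, n_4 = bd$ with $(a,b)=(c,d)=1$ and at least one of these a proper factorisation) contributing a lower-order term. The Diophantine-diagonal terms give
\[
2\int_{\R}\Phi(t/T)\bigg(\sum_{n\geqs 1} \frac{w_t(n)^2}{n}\bigg)^2 dt - \int_{\R}\Phi(t/T)\bigg(\sum_{n\geqs 1}\frac{w_t(n)^2}{n}\bigg)^2 dt = \int_{\R}\Phi(t/T)\bigg(\sum_{n\geqs 1}\frac{w_t(n)^2}{n}\bigg)^2 dt,
\]
wait --- more carefully, the number of ordered pairs $(n_3,n_4)$ with $\{n_3,n_4\}=\{n_1,n_2\}$ is $2$ if $n_1\neq n_2$ and $1$ if $n_1 = n_2$, so the Diophantine-diagonal sum is $2(\sum_n w_t(n)^2/n)^2 - \sum_n w_t(n)^4/n^2$, and the second sum is $O(1)$. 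Using \eqref{weight bound}, $\sum_{n}w_t(n)^2/n = \log \tau + O(1) = \tfrac12\log(t/2\pi) + O(1) = \tfrac12\log T + O(1)$ for $t\asymp T$, so $\int_\R \Phi(t/T)\cdot 2(\tfrac12\log T)^2\,dt = \tfrac12 T\hat\Phi(0)(\log T)^2(1+o(1))$; I will need to extract the secondary terms in $\sum_n w_t(n)^2/n$ carefully enough (e.g.\ via a contour integral / Mellin argument on the weight) to see the main term is $\tfrac{\hat\Phi(0)}{2}T(\log T)^2$ with error $O(T\log T)$ --- this bookkeeping, not any genuine difficulty, is where most of the work lies and is where the weaker error term $O(T\log T)$ (versus $O(T(\log T)^{5/3})$ in Proposition~\ref{S1 prop}) comes from: here we avoid the loss incurred by the exponential-sum estimates.

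For the harmonic-but-not-Diophantine diagonal, parametrise $n_1 = ad, n_2 = bc, n_3 = ac, n_4 = bd$; then $n_3+n_4-n_1-n_2 = (a-b)(c-d)$, which vanishes only in the Diophantine case, so these genuinely oscillating terms contribute at most
\[
\sum_{\substack{a,b,c,d\geqs 1\\ \{a,b\}\neq\{\cdot\},\ adbc\leqs T}} \frac{1}{abcd}\cdot T \ll T\bigg(\sum_{n\leqs T}\frac{d(n)}{n}\bigg) \ll T(\log T)^2
\]
if one is crude --- this is too lossy by a factor $\log T$, so I instead need to use the oscillation $e(\alpha(a-b)(c-d))$ or, better, exploit that the relevant count of such quadruples up to $T$ is $O(T(\log T))$ with the harmonic weight, giving $O(T\log T)$ overall. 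The cleanest route: bound this piece by $T \sum_{a<b}\sum_{c\neq d, abcd\leqs T}(abcd)^{-1}$ and observe the constraint $\{a,b\}\neq\{c,d\}$ combined with $ad\cdot bc \leqs T$ restricts the double sum to $O(\log T)$, not $O((\log T)^2)$, because for each value of the product $abcd = N\leqs T$ the harmonic sum $\sum (abcd)^{-1} = d_4(N)/N$ summed over $N\leqs T$ with a non-square/non-diagonal restriction is still $O((\log T)^3/T \cdot T) $ --- this is the one spot I should be careful, and if the elementary divisor bound is insufficient I will invoke the oscillation of $e(\alpha(a-b)(c-d))$ together with Lemma~\ref{expsum lem} exactly as in Proposition~\ref{S1 prop}, which certainly suffices and reintroduces only a power-of-$\log$ loss well within $O(T\log T)$. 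I expect \emph{this} estimate --- showing the non-diagonal harmonic solutions contribute $O(T\log T)$ rather than $O(T(\log T)^2)$ --- to be the only real obstacle; everything else is the routine extraction of the main term from the weights $w_t$.
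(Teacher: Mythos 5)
Your treatment of the harmonic diagonal $n_1 n_2 = n_3 n_4$ matches the paper's structure (split into the permutation/``Diophantine'' diagonal, which gives the main term via $\sum_n w_t(n)^2/n = \tfrac12\log(t/2\pi)+O(1)$, and the harmonic-but-not-Diophantine solutions parametrised by $m_1,m_2,\ell_1,\ell_2$). However there are two genuine gaps, one of which is fatal to the argument as written.

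First and most seriously, your claim that the off-diagonal $n_1 n_2 \neq n_3 n_4$ is \emph{negligible} after integration by parts is incorrect. Integration by parts gives $\ll_j T\,(T\,|\log(n_1 n_2/n_3 n_4)|)^{-j}$, and with $n_j \leqs T^{1/2+\epsilon}$ the worst case $|n_1 n_2 - n_3 n_4|=1$ gives only $|\log(n_1 n_2/n_3 n_4)| \gg T^{-1-\epsilon}$, so $T\,|\log| \gg T^{-\epsilon}$, and the factor $(T|\log|)^{-j}$ \emph{grows} rather than decays. Integration by parts only wins when $T|\log| \gg T^{\epsilon'}$, i.e.\ for the ``far'' off-diagonals $|n_1n_2-n_3n_4| \gg n_3n_4\,T^{-1+\epsilon}$. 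The close off-diagonals survive, and the paper devotes an entire lemma to showing they contribute $\ll T\log T$ --- which is precisely the size of the stated error term, not $T^{-100}$. Without this piece your argument cannot close, and the conclusion that ``there is effectively no off-diagonal issue here'' and that ``$\mu(\alpha)<3$ is not used in an essential way'' is wrong.

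Second, your crude bound for the harmonic-but-not-Diophantine solutions is off by two logs: the weighted count is $\sum_{n_1n_2=n_3n_4}(n_1n_2n_3n_4)^{-1/2} = \sum_{N}d(N)^2/N \asymp (\log T)^4$, not $(\log T)^2$, so the trivial bound is $T(\log T)^4$ and you genuinely need cancellation from the phase $e(-\alpha(m_1-m_2)(\ell_1-\ell_2))$ to beat it. The paper gets this by Mellin-transforming the weights, applying partial summation in the $(h,k)$-variables to expose the bilinear exponential sum $\sum_{h\leqs y}\sum_{k\leqs x}e(-\alpha d h k)$, and then shifting contours using the power savings from Lemma~\ref{expsum lem} (hence the hypothesis $\mu(\alpha)<3$) to land at $c_T\hat\Phi(0)T$ with $c_T \ll \log\log T$. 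Your instinct to ``invoke the oscillation together with Lemma~\ref{expsum lem}'' is the right direction, but the bookkeeping you sketch (divisor bounds giving $O(T\log T)$ without oscillation) does not work.
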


We now show how to deduce our main results from the above propositions.

\begin{proof}[Proof of Theorem~\ref{main thm}]
Take $\Phi$ to  satisfy the conditions of Proposition~\ref{S1 prop} whilst majorising the characteristic function of the interval $[1,2]$ so that 
\[
\int_T^{2T}|\zeta(\tfrac12+it,\alpha)|^4dt\leqs \int_\mb{R}|\zeta(\tfrac12+it,\alpha)|^4\Phi(t/T)dt.
\]
Write
\[
S_1
=
\sum_{n\geqs 0}\frac{1}{(n+\alpha)^{1/2+it}}w_t(n+\alpha),
\qquad
S_2
=
\chi(\tfrac{1}{2}+it)\sum_{n\geqs 1}\frac{e(-n\alpha)}{n^{1/2-it}}w_{t}(n)
\]
so that by Lemma~\ref{afe lem},
\[
\zeta(\tfrac12+it,\alpha)=S_1+S_2+O(t^{-1/2}).
\]
Taking the absolute fourth power and expanding gives 
\begin{multline}\label{decomposition}
\int_\mathbb{R}|\zeta(\tfrac12+it,\alpha)|^4\Phi(t/T)dt
= 
\int_\mathbb{R}\bigg(
|S_1|^4+4|S_1|^2|S_2|^2+|S_2|^4
\\
+4\Re S_1^2\ol{S_1S_2}
+2\Re S_1^2\ol{S_2}^2
+4\Re S_1S_2\ol{S_2}^2
\bigg)\Phi(t/T)dt
+E(T)
\end{multline}
where 
\[
E(T)\ll \int_\mathbb{R}\bigg(
|S_1|^3+|S_2|^3+|S_1||S_2|^2+|S_1|^2|S_2|
\bigg)\Phi(t/T)\frac{dt}{t^{1/2}}.
\]
By  Propositions~\ref{S1 prop} and \ref{S2 prop} along with H\"older's inequality we find 
\begin{equation} \label{wasteful holder}
\int_\mb{R} |S_1|^2|S_2|^2\Phi(t/T)dt\ll T(\log T)^2,\qquad \int_\mb{R} |S_i||S_j|^3\Phi(t/T)dt\ll T(\log T)^2
\end{equation}
and similarly
\begin{equation*}
E(T)\ll \bigg(\int_\R \Phi(t/T)\frac{dt}{t^2}\bigg)^{1/4}\Bigg[\max_{j\in\{1,2\}} \bigg(\int_\R |S_j|^4 \Phi(t/T) dt\bigg)^{3/4} \Bigg] 
\ll T^{1/2} (\log T)^{3/2}.
\end{equation*}
Theorem~\ref{main thm} then follows. 
\end{proof}

\begin{proof}[Proof of Corollary~\ref{main cor}]
For the upper bound, H\"older's inequality gives for any $0\leqs k\leqs 2$,
\[
\int_T^{2T}|\zeta(\tfrac12+it,\alpha)|^{2k}dt\leqs T^{1-k/2}\bigg(\int_T^{2T}|\zeta(\tfrac12+it,\alpha)|^4dt\bigg)^{k/2}\ll T(\log T)^{k}.
\]
By Rane's bound \eqref{rane}, for $k\geqs 1$ we have 
\[
T\log T\sim \int_T^{2T}|\zeta(\tfrac12+it,\alpha)|^{2}dt\leqs T^{1-1/k}\bigg(\int_T^{2T}|\zeta(\tfrac12+it,\alpha)|^{2k}dt\bigg)^{1/k}
\]
whilst for $0\leqs k\leqs 1$ we have 
\begin{align*}
T\log T
\ll &
\bigg(\int_T^{2T}|\zeta(\tfrac12+it,\alpha)|^{2k}dt\bigg)^{\tfrac{1}{2-k}}\bigg(\int_T^{2T}|\zeta(\tfrac12+it,\alpha)|^{4}dt\bigg)^{\tfrac{1-k}{2-k}}
\\
\ll &
\bigg(\int_T^{2T}|\zeta(\tfrac12+it,\alpha)|^{2k}dt\bigg)^{\tfrac{1}{2-k}}(T\log ^2T)^{\tfrac{1-k}{2-k}}
\end{align*}
and rearranging gives the lower bound.
\end{proof}

\begin{proof}[Proof of Corollary~\ref{main cor 2}]
By an argument similar to \eqref{periodic zeta bound} with $s = 0$ along with the bound $P(\tfrac12+it,\alpha)\ll t$, which follows in the usual way by Stieltjes integration, one has that
\[ \zeta(\tfrac12+it,\alpha) = \chi(\tfrac12+it) P(\tfrac12 - it,-\alpha) + O(te^{-\pi t}). \]
Whence,
\begin{equation}\label{h to p}
 \int_{T}^{2T} |P(\tfrac12+it,\alpha)|^{2k} dt =(1+O(e^{-0.04T})) \int_{T}^{2T} |\zeta(\tfrac12+it,\alpha)|^{2k}dt+O(e^{-6kT})
 \end{equation}
since the $\chi$-factor has modulus $1$ and the contribution to the integral from those $t$ for which $\zeta(\tfrac12+it,\alpha)\ll e^{-3.1T}$, say, is clearly negligible. The result follows by Corollary~\ref{main cor}.
\end{proof}


\section{Proof of Proposition~\ref{S1 prop}}\label{sec: S1 prop}

\subsection{Diagonal terms} \label{first diag sec}
Expanding the fourth power and pushing the integral through the sum we find
\begin{multline*}
 \int_\mathbb{R}\bigg|\sum_{n\geqs 0}\frac{w_t(n+\alpha)}{(n+\alpha)^{1/2+it}}\bigg|^4\Phi(t/T)dt
 \\
 =
\sum_{n_j\geqs 0}\frac{1}{\prod_{j=1}^4(n_j+\alpha)^{1/2}}\int_\mathbb{R}\bigg(\frac{(n_1+\alpha)(n_2+\alpha)}{(n_3+\alpha)(n_4+\alpha)}\bigg)^{-it}\Phi(t/T) \wit(\ul{n})dt.
\end{multline*}
where 
\[
\wit(\ul{n})
=
w_t(n_1+\alpha) w_t(n_2+\alpha) {w_t(n_3+\alpha)} {w_t(n_4+\alpha)}
\]
for convenience.

The diagonals terms, i.e. \!\!those for which $(n_1+\alpha)(n_2+\alpha)=(n_3+\alpha)(n_4+\alpha)$, contribute
\[
\mc{I}_D:=\sum_{\substack{(n_1+\alpha)(n_2+\alpha)\\=(n_3+\alpha)(n_4+\alpha)}}
\frac{1}{\prod_{j=1}^4(n_j+\alpha)^{1/2}}
\int_\mathbb{R}\Phi(t/T)\wit(\ul{n})dt
\]
and since for irrational $\alpha$ the only solutions of the given equation are $n_1=n_3, n_2=n_4$ and $n_1=n_4,n_2=n_3$, this is  
\[
\int_\mathbb{R}\bigg(2\bigg(\sum_{n\geqs 0}\frac{w_t(n+\alpha)^2}{(n+\alpha)}\bigg)^2 - \sum_{n\geqs 0}\frac{w_t(n+\alpha)^4}{(n+\alpha)^2}\bigg)\Phi(t/T)dt
\]
by symmetry.

By \eqref{weight bound}, 
\begin{equation*}\begin{split}
\sum_{n \geqs 0} \frac{w_t(n+\alpha)^{2}}{(n+\alpha)} 
= & 
\sum_{0\leqs n \leqs \sqrt{t/2\pi} - \alpha} \frac{1}{(n+\alpha)} + O\bigg(\sum_{0\leqs n \leqs \sqrt{t/2\pi} - \alpha} \frac{(n+\alpha)^{A-1}}{t^{A/2}}\bigg) 
\\
& \qquad\qquad\qquad + O\bigg(\sum_{n \geqs \sqrt{t/2\pi} - \alpha} \frac{t^{A/2}}{(n+\alpha)^{A+1}}\bigg) 
\\
= &\frac{1}{2} \log(t/2\pi) + O(1).\end{split}\end{equation*}
Similarly, $ \sum_{n \geqs 0} {w_t(n+\alpha)^{4}}{(n+\alpha)^{-2}} \ll 1  $ and so
\begin{equation*}\mc{I}_D= \frac{1}{2} \int_\R \big(\log(t/2\pi)+O(1)\big)^2 \Phi(t/T) dt = \frac{\hat{\Phi}(0)}{2} T(\log T)^2 + O(T\log T). \end{equation*}
It remains to show that the off-diagonal terms contribute at most $\ll T (\log T)^{5/3}$.

\subsection{Off diagonal terms: initial cleaning}

In this subsection we perform some fairly standard procedures; first restricting the off-diagonals to the ``close" off-diagonals and then applying Taylor expansions of the summands. 

The off diagonal terms are given by 
\[
\mc{I}_O
:=
\sum_{\substack{n_j \geqs 0\\(n_1+\alpha)(n_2+\alpha)\\\neq (n_3+\alpha)(n_4+\alpha)}}\frac{1}{\prod_{j=1}^4(n_j+\alpha)^{1/2}}\int_\mathbb{R}\bigg(\frac{(n_1+\alpha)(n_2+\alpha)}{(n_3+\alpha)(n_4+\alpha)}\bigg)^{-it}\Phi(t/T)\wit(\ul{n})dt.
\]
We divide this sum into three cases, depending on whether $\max(n_1,n_2)$ is greater than, less than, or equal to $\max(n_3,n_4)$.

 The lattermost case contributes a lower order term; to see this, by symmetry it suffices to bound
\begin{equation*} \sum_{\substack{0\leqs n_2, n_4 \leqs n_1\\n_2 \neq n_4}} \frac{1}{(n_1+\alpha)(n_2 + \alpha)^{1/2}(n_4 + \alpha)^{1/2}} \int_\R \bigg(1 + \frac{h}{n_4+\alpha}\bigg)^{-it}\Phi(t/T) \wit(\ul{n}) \,dt, \end{equation*}
where $h = n_2 - n_4$ and where $\wit(\ul{n}) = w_t(n_1+\alpha)^2 w_t(n_2 + \alpha) w_t(n_4 + \alpha)$. 
 Integrating by parts $j$ times using \eqref{weight diff}, we find
\begin{equation*} \int_\R \bigg(1 + \frac{h}{n_4 + \alpha}\bigg)^{-it} \Phi(t/T) \wit(\ul{n}) \, dt \ll_j \frac{T}{|T\log(1 + \tfrac{h}{n_4 + \alpha})|^j}. \end{equation*}
Thus, if $h/(n_4 + \alpha) \gg T^{-1+\epsilon}$, then the above quantity is $\ll T^{-A}$ on taking $j$ large enough and hence such terms may be omitted. For the remaining terms, without losing generality we only bound the terms with $h > 0$. The close diagonal condition $h \ll (n_4 + \alpha) T^{-1 + \epsilon}$ implies that $n_2 + \alpha = (n_4 + \alpha)(1 + O(T^{-1+\epsilon}))$. Recalling that the weights restrict to $n_j \leqs T^{1/2+\epsilon}$ and estimating the integral trivially, we get that this sum is
\begin{equation*} \ll T \sum_{\substack{0 \leqs n_4 \leqs n_1 \leqs T^{1/2 + \epsilon}\\0< h \ll \frac{(n_4 + \alpha)}{T^{1-\epsilon}}}} \frac{1}{(n_1 + \alpha)(n_4 + \alpha)} \ll T^{1/2 + \epsilon}. \end{equation*}
We remark here that a similar argument shows that terms with $n_j = 0$ can be safely ignored, and so we can assume $n_j \geqs 1$.

Now, returning to $\mc{I}_O$, the terms with $\max(n_1,n_2) < \max(n_3,n_4)$ are seen to be conjugates of those with $\max(n_1,n_2) > \max(n_3,n_4)$ due to the symmetry $(n_1,n_2) \leftrightarrow (n_3,n_4)$. Pairing these terms together, we get

\begin{multline*} \Re\bigg(\sum_{\substack{n_j \geqs 1\\ \max(n_3,n_4)>\max(n_1,n_2)\\(n_1+\alpha)(n_2+\alpha)\neq (n_3+\alpha)(n_4+\alpha)}}\frac{1}{\prod_{j=1}^4(n_j+\alpha)^{1/2}}\\ \times \int_\mathbb{R}\bigg(\frac{(n_1+\alpha)(n_2+\alpha)}{(n_3+\alpha)(n_4+\alpha)}\bigg)^{-it}\Phi(t/T)\wit(\ul{n})dt\bigg). \end{multline*}

We now let
\[
(n_1+\alpha)(n_2+\alpha)-(n_3+\alpha)(n_4+\alpha)=h_1+h_2\alpha
\]
with 
\begin{equation}\label{h_i}
h_1=n_1n_2-n_3n_4,\qquad h_2=n_1+n_2-n_3-n_4
\end{equation}
and note that $h_1\ll T^{1+\epsilon}$ and $h_2\ll T^{1/2+\epsilon}$.

We restrict again to close off-diagonals. Integrating by parts $j$ times using \eqref{weight diff} we see that
\[
\int_\mathbb{R} \exp\bigg(it\log(1+\tfrac{h_1+h_2\alpha}{(n_3+\alpha)(n_4+\alpha)})\bigg)\Phi(t/T)\wit(\ul{n})dt\ll_j \frac{T}{|T\log(1+\tfrac{h_1+h_2\alpha}{(n_3+\alpha)(n_4+\alpha)})|^j}.
\]
Thus, if $(h_1+h_2\alpha)/(n_3+\alpha)(n_4+\alpha)\gg T^{-1+\epsilon}$ then the above quantity is $\ll T^{-A}$ on taking $j$ large enough, and so we may omit such terms. For the remaining terms we apply the expansions 
\begin{align*}
\log\bigg(\frac{(n_1+\alpha)(n_2+\alpha)}{(n_3+\alpha)(n_4+\alpha)}\bigg)= &\frac{h_1+h_2\alpha}{(n_3+\alpha)(n_4+\alpha)}+O(T^{-2+\epsilon})
\\
\bigg(\frac{(n_1+\alpha)(n_2+\alpha)}{(n_3+\alpha)(n_4+\alpha)}\bigg)^{-it}= &\exp\bigg(-it\frac{h_1+h_2\alpha}{(n_3+\alpha)(n_4+\alpha)}\bigg)+O(T^{-1+\epsilon})
\\
(n_1+\alpha)(n_2+\alpha)= & (n_3+\alpha)(n_4+\alpha)(1+O(T^{-1+\epsilon})).
\end{align*}

The errors acquired from these approximations are
\begin{equation*} 
 \ll
  \sum_{\substack{n_j \geqs 1\\ \max(n_3,n_4) > \max(n_1,n_2)\\0< |h_1+h_2\alpha|\ll \frac{(n_3+\alpha)(n_4+\alpha)}{T^{1-\epsilon}}}}\frac{1}{(n_3+\alpha)(n_4+\alpha)}\int_\mathbb{R}\frac{\Phi(t/T)|\wit(\ul{n})|}{T^{1-\epsilon}}dt.
\end{equation*}
Pushing the sum inside, recalling by \eqref{weight bound} that $w_t$ restricts the sums to $n_j \leqs T^{1/2+\epsilon}$ and computing the integral, this is 
\begin{equation*} 
 \ll 
 T^\epsilon \sum_{\substack{1\leqs n_j\leqs T^{1/2+\epsilon}\\0< |h_1+h_2\alpha|\ll \frac{(n_3+\alpha)(n_4+\alpha)}{T^{1-\epsilon}}\\ h_1=n_1n_2-n_3n_4\\ h_2=n_1+n_2-n_3-n_4}}\frac{1}{(n_3+\alpha)(n_4+\alpha)}.\end{equation*}
We allow $h_1, h_2, n_3, n_4$ to vary freely. There are $\ll T^{1/2 + \epsilon}$ choices for $h_2$. After fixing that, there are $\ll (n_3+\alpha)(n_4+\alpha)/T^{1-\epsilon}$ choices for $h_1$. Then, there are $\ll T^{1/2+\epsilon}$ choices for $n_3, n_4$. Finally, applying a divisor bound on the constraint $n_1n_2 = h_1 + n_3 n_4$ gives that there are $\ll T^{\epsilon}$ choices for $n_1$ and $n_2$. Note that it is not possible that $h_1 + n_3 n_4 = 0$, since we have discarded the terms with $n_1=0$ or $n_2 = 0$. We see thus that the above sum is $\ll T^{1/2+\epsilon}$, which is good enough. 


Thus far we have 
\begin{multline*}
\mc{I}_O
=
\Re\bigg(\sum_{\substack{n_j \geqs 1\\ \max(n_3,n_4) > \max(n_1,n_2)\\0< |h_1+h_2\alpha|\ll \frac{(n_3+\alpha)(n_4+\alpha)}{T^{1-\epsilon}}}}
\frac{1}{(n_3+\alpha)(n_4+\alpha)}
\\
\times
\int_\mathbb{R}\exp\bigg(it\frac{h_1+h_2\alpha}{(n_3+\alpha)(n_4+\alpha)}\bigg)\Phi(t/T)w_t(\ul{n})dt\bigg)
+O(T^{1/2+\epsilon})
\end{multline*}
Integrating by parts, the main term here is the negative of the imaginary part of

\begin{equation*} \sum_{\substack{n_j \geqs 1\\ \max(n_3,n_4) > \max(n_1,n_2)\\0< |h_1+h_2\alpha|\ll \frac{(n_3+\alpha)(n_4+\alpha)}{T^{1-\epsilon}}}}\frac{1}{h_1 + h_2\alpha} \int_\R \exp\bigg(it\frac{h_1 + h_2\alpha}{(n_3+\alpha)(n_4+\alpha)}\bigg) \frac{d}{dt} (\Phi(t/T) \wit(\ul{n})) \,dt. \end{equation*}
Note that $ \int_\R \frac{d}{dt} (\Phi(t/T) \wit(\ul{n})) \,dt = 0$ due to the compact support of $\Phi$. Subtracting this from inner integral and pushing the sum through, we now focus on
\begin{equation*} \mc{I}' := \sum_{\substack{n_j \geqs 1\\ \max(n_3,n_4) > \max(n_1,n_2)\\0< |h_1+h_2\alpha|\ll \frac{(n_3+\alpha)(n_4+\alpha)}{T^{1-\epsilon}}}}\frac{1}{h_1 + h_2\alpha}\bigg\{\exp\bigg(it\frac{h_1 + h_2\alpha}{(n_3+\alpha)(n_4+\alpha)}\bigg) - 1\bigg\} \wit(\ul{n}).
\end{equation*}
The purpose of this subtraction is to emulate the unsmoothed integral $\int_T^{2T} \exp(iHt)dt=(e^{2iHT}-e^{iHT})/iH$ which more visibly has a finite limit when $H\to0$. This is required since $h_1+h_2\alpha$ can become arbitrarily small (depending on Diophantine properties of $\alpha$).

We will show in the next subsection that for $t \asymp T$ and $\mu(\alpha) < 3$, $\mc{I'} \ll T(\log T)^{5/3}$. Replacing $\wit(\ul{n})$ here by $\frac{d}{dt} w_t(\ul{n})$ can be dealt in a similar way using \eqref{weight diff} with the resultant bound having a factor of $T^{-1}$ compared with the above bound for $\mc{I}'$.
 
From these we acquire 
\begin{equation*} \mc{I}_{O} \ll \int_\R \bigg(\frac{1}{T} |\Phi'(t/T)|\cdot T(\log T)^{5/3}+|\Phi(t/T)|(\log T)^{5/3}\bigg) \,dt \ll T(\log T)^{5/3}, \end{equation*}
as desired to finish the proof of the proposition. 

We record at this point the trivial observation
\begin{equation}\label{exp bound} \exp(itx) - 1 \ll \min(1,T|x|), \end{equation}
for $t \asymp T$ and uniformly in $x$ which shall be used in the sequel.

\subsection{Parametrising solutions to the off-diagonal equations}

It remains to bound $\mc{I}'$. Since $\mc{I}'$ is invariant under swapping $n_3 \leftrightarrow n_4$, without loss of generality we can add the hypothesis $n_4 \leqs n_3$ to the sum. Thus, we have that $n_1, n_2 < n_3$. We now find parametrised solutions to \eqref{h_i} and express our sum over $n_j$ in terms of these parameters.  

Fix $h_1, h_2$, and $n_3$ and set $k = n_3 - n_1$. We get from \eqref{h_i} that
\begin{equation} \label{s1n2n4} n_2 = h_2 + n_4 + k, \qquad n_4 = n_3 - h_2 - k + \frac{n_3h_2 -h_1}{k}. \end{equation}
This tells us that $k \mid h_2 n_3 - h_1$ and $k \geqs 1$.  Relabelling $n_3 \mapsto n$; rearranging the sum with $h_1$, $h_2$, $n$ outside and $k$ inside; and setting 
\[
n^* = n - k - h_2 + (n h_2 - h_1)/k,
\] 
we find that it suffices to bound
\begin{equation*} \sum_{\substack{n\geqs 1\\0 < |h_1 + h_2\alpha| \ll \frac{(n+\alpha)^2}{T^{1-\epsilon}}}} \sum_{\substack{k \mid n h_2 - h_1}}\frac{1}{h_1 + h_2\alpha}\bigg\{\exp\bigg(it\frac{h_1 + h_2\alpha}{(n+\alpha)(n^*+\alpha)}\bigg) - 1\bigg\} \WIt(h_1,h_2,n,k), \end{equation*}
where, for brevity, 
\begin{multline} \label{WIt} \WIt(h_1,h_2,n,k) = w_t(n+k+\alpha)w_t(n+\tfrac{nh_2-h_1}{k}+\alpha)\\\times w_t(n+\alpha)w_t(n-k-h_2 + \tfrac{nh_2-h_1}{k}+\alpha). \end{multline}
 
We rearrange the innermost sum in terms of the greatest common divisor $g = (k,h_2)$. Clearly, the divisibility constraint $k \mid nh_2 - h_1$ ensures that $g \mid h_1$ as well. Replacing $(k,h_1,h_2)$ with $(gk, gh_1,gh_2)$ and pulling the sum over $g$ out, we get 
\begin{multline*} 
\sum_{1 \leqs g\ll T^{\epsilon}}\frac{1}{g} \sum_{\substack{n\geqs 1\\0 < |h_1 + h_2\alpha| \ll \frac{(n+\alpha)^2}{g T^{1-\epsilon}}}} 
\sum_{\substack{k \mid nh_2 - h_1\\(k,h_2) = 1}}
\frac{1}{h_1 + h_2\alpha}\\\times \bigg\{\exp\bigg(igt\frac{h_1 + h_2\alpha}{(n+\alpha)(n_g^*+\alpha)}\bigg)-1\bigg\} \WIt(gh_1,gh_2,n,gk)
\end{multline*}
where 
\begin{equation}\label{n^*g}
n^*_g= n - gk - gh_2 + (n h_2 - h_1)/k.
\end{equation}

We divide the above sum into three cases:
\begin{itemize}

\item $h_1$ and $h_2$ have the same sign.

\item $h_1$ and $h_2$ have opposite signs and $|h_1 + h_2\alpha| \geqs \frac{1}{2g}$.

\item $h_1$ and $h_2$ have opposite signs and $|h_1 + h_2\alpha| < \frac{1}{2g}$.

\end{itemize}

\subsection{Denominator case I: same sign} \label{S1 same signs}
We deal with $h_1, h_2 \geqs 0$, noting that a similar argument works for $h_1,h_2 \leqs 0$. The constraint $|h_1 + h_2\alpha| \ll (n+\alpha)^2 g^{-1} T^{-1+\epsilon}$ now implies that in fact $h_1,h_2 \ll g^{-1}T^\epsilon$. Thus by \eqref{exp bound}, such terms trivially contribute

\begin{equation*}\begin{split}& \ll \sum_{1\leqs g \ll T^\epsilon}\frac{1}{g}\sum_{h_1,h_2 \ll \frac{T^\epsilon}{g}} \frac{1}{h_1 + h_2\alpha} \sum_{n \geqs 1} \sum_{\substack{k\mid nh_2 - h_1\\(k,h_2) = 1}} |\WIt(gh_1,gh_2,n,gk)| 
 \\ & 
 \ll\sum_{1\leqs g \ll T^\epsilon}\frac{1}{g}\sum_{h_1,h_2 \ll \frac{T^\epsilon}{g}} \frac{1}{h_1 + h_2\alpha}\sum_{\substack{k\ll T^{1/2+\epsilon}\\(k,h_2) = 1}} \sum_{n\equiv \ol{h_2}h_1 \!\!\!\mod k}  |\WIt(gh_1,gh_2,n,gk)| 
  \\ & 
  \ll\sum_{1\leqs g \ll T^\epsilon}\frac{1}{g}\sum_{h_1,h_2 \ll \frac{T^\epsilon}{g}} \frac{1}{h_1 + h_2\alpha}\sum_{\substack{k\ll T^{1/2+\epsilon}\\(k,h_2) = 1}} \frac{T^{1/2+\epsilon}}{k} \ll T^{1/2+\epsilon}, \end{split}\end{equation*}
which is good enough. Here we have used $1 \leqs k \leqs n \ll T^{1/2+\epsilon}$, and the estimate $\WIt \ll \1_{n\leqs T^{1/2+\epsilon}}$. 

\subsection{Denominator case II: opposite signs, large denominator} \label{S2 opp signs, large denominator}
We now turn to $h_1 \geqs 0$, $h_2 \leqs 0$ and $|h_1 + h_2\alpha|\geqs 1/(2g)$. As before, the argument for $h_1 \leqs 0$, $h_2 \geqs 0$ and $|h_1 + h_2\alpha| \geqs 1/(2g)$ is similar. Thus, relabelling $h_2 \mapsto -h_2$, and using \eqref{exp bound}, we now have both $h_1,h_2\geqs 0$, and the sum to control is

\begin{equation*} \sum_{1\leqs g\ll T^\epsilon}\frac{1}{g}\sum_{\substack{h_1,h_2\geqs 0\\\frac{1}{2g} < |h_1 - h_2 \alpha| \ll \frac{T^{\epsilon}}{g}}}\frac{1}{|h_1 - h_2\alpha|}\sum_{n\geqs 1}\sum_{\substack{k \mid nh_2 + h_1\\(k,h_2) = 1}}|\WIt(gh_1,-gh_2,n,gk)| 
. \end{equation*} 
We can quickly exclude the case $h_2=0$ since this contributes $\ll T^{1/2+\epsilon}$ on applying the divisor bound $d(h_1)\ll T^{\epsilon}$ and applying, say, \eqref{WIt bound} below for the sum over $n$. 

Recall that in our previous notation $n_1 = n - gk \geqs 0$, and hence $k \leqs n/g$. Further, by \eqref{s1n2n4},
\begin{equation*} 0 \leqs n_2 = n\Big(1 -\frac{gh_2}{gk}\Big) - \frac{gh_1}{gk} \leqs n\Big(1 - \frac{h_2}{k}\Big), \end{equation*}
whence $h_2\leqs k$. Thus, swapping the order of summation, 
\begin{equation*} \sum_{ g\ll T^\epsilon}\frac{1}{g}\sum_{\substack{h_1\geqs 0,h_2\geqs 1\\\frac{1}{2g} < |h_1 - h_2 \alpha| \ll \frac{T^{\epsilon}}{g}}}\frac{1}{|h_1 - h_2\alpha|}\sum_{\substack{k\geqs h_2\\(k,h_2) = 1}}\sum_{\substack{n\equiv -\ol{h_2}h_1(k)\\n\geqs gk}}|\WIt(gh_1,-gh_2,n,gk)| . \end{equation*} 

Now, using \eqref{weight bound} and recalling the definition of $\WIt$ in \eqref{WIt},
\begin{equation}\label{WIt bound}
\WIt(gh_1,-gh _2,n,gk) 
\ll 
w_t(n+\alpha) \ll \1_{n+\alpha\leqs \tau} + \1_{n+\alpha>\tau}(\tfrac{\tau}{n+\alpha})^3.
\end{equation}
Hence,
\begin{equation*} \sum_{\substack{n\equiv -\ol{h_2}h_1\!\!\!\mod k\\n\geqs gk}} |\WIt| \ll \1_{k\leqs \tau/g}(\tfrac{\tau}{k}) +  \1_{k > \tau/g}(\tfrac{\tau^3}{g^2k^3}) \end{equation*}
where in the sum over $n+\alpha>\tau$ we have written $1=\mathds{1}_{k\leqs \tau/g}+\mathds{1}_{k> \tau/g}$ and for the first term summed over $n>\tau-\alpha$ and for the second summed over $n\geqs gk$. 
Performing the sum over $k$ in a similar fashion, we are left with bounding
\begin{equation*} \tau \sum_{ g\ll T^\epsilon} \frac{1}{g} \sum_{\substack{h_1\geqs 0,h_2\geqs 1\\\frac{1}{2g} < |h_1 - h_2\alpha| \ll \frac{T^\epsilon}{g}}} \frac{1}{|h_1 - h_2\alpha|}\bigg(\1_{h_2\leqs \tau/g}(\log(\tfrac{\tau}{gh_2})+1)+\1_{h_2>\tau/g}(\tfrac{\tau}{gh_2})^2\bigg).\end{equation*}
Pulling the sum over $h_2$ out this is 
\begin{equation*} \begin{split} 
&\,\,\tau
\sum_{g\leqs T^{\epsilon}}\frac{1}{g}
\sum_{h_2\geqs 1}\bigg(\1_{h_2\leqs\tau/g}(\log(\tfrac{\tau}{gh_2})+1)+\1_{h_2>\tau/g}(\tfrac{\tau}{gh_2})^2\bigg)
\sum_{\substack{h_1\geqs 0 \\\frac{1}{2g}<|h_1 - h_2\alpha|\ll \frac{T^\epsilon}{g}}} \frac{1}{|h_1 - h_2\alpha|}
\\ 
\ll & 
\,\,\tau
\sum_{g\leqs T^{\epsilon}}\frac{1}{g}
\sum_{h_2\geqs 1}\bigg(\1_{h_2\leqs\tau/g}(\log(\tfrac{\tau}{gh_2})+1)+\1_{h_2>\tau/g}(\tfrac{\tau}{gh_2})^2\bigg)
\big(g+\log T\big)\\ 
\ll & 
\,\,\tau
\sum_{g\leqs T^{\epsilon}}\frac{1}{g}\cdot\frac{\tau}{g}\cdot(g+\log T)
\ll
\tau^2\log T 
\ll 
T\log T 
\end{split} \end{equation*}
since $\sum_{m\leqs x}\log(x/m)\ll x$.
\subsection{Denominator case III: opposite signs, small denominator}

 It is here that our assumptions on the Diophantine properties of $\alpha$ begin to play a role in the moments. We shall use Lemma~\ref{kruse} to control these sums.

We consider $|h_1 + h_2\alpha| <1/(2g)$ and again restrict our attention to $h_1 \geqs 0$, $h_2 \leqs 0$ relabelling $h_2 \mapsto -h_2$. Recall that $k \geqs 1$ so that $g = (k,h_2) \geqs 1$ and hence $|h_1 - h_2\alpha| = \| h_2\alpha\|$ arising from the unique choice $h_1=\lfloor h_2\alpha\rceil$ - the closest integer to $h_2\alpha$. Thus as $h_2$ varies $h_1$ is fixed. Renaming $h_2$ as $h$  it suffices to show that 
\begin{multline*}\label{sd bound} \sum_{1 \leqs g\ll T^{\epsilon}}\frac{1}{g} \sum_{\substack{n,h \geqs 1\\\normh <\frac{1}{2g}}} \sum_{\substack{k \mid nh + \near{h} \\(k,h) = 1}}\frac{1}{\normh}\\\times \bigg\{\exp\bigg(igt\frac{\normh}{(n+\alpha)(n_{g,-}^*+\alpha)}\bigg)-1\bigg\} \WIt'(g,h,n,k)
\\
\ll
T(\log T)^{5/3},
 \end{multline*}
where 
\begin{equation*} \WIt'(g,h,n,k) = \WIt(g\lfloor h\alpha\rceil,-gh,n,gk) \end{equation*}
and, after transforming \eqref{n^*g},
\[
n^*_{g,-}= n - g(k - h) - (n h + \lfloor h\alpha\rceil)/k.
\]

As argued before, $h \leqs k \leqs n/g$. Further, note that $h = k$ if and only if $n_2 = 0$, and hence as noted in the beginning of this section, such terms can be ignored. The same applies to the case $n=gk$ since we had $n_1 = n - gk$. Swapping the order of summation, 
\begin{multline*} \sum_{1 \leqs g\ll T^{\epsilon}}\frac{1}{g} \sum_{\substack{h\geqs 1\\\normh<\frac{1}{2g}}}\frac{1}{\normh}\sum_{\substack{k> h\\(h,k)=1}} \sum_{\substack{n\equiv -\ol{h}\near{h} \!\!\!\mod k\\n> gk}}\bigg\{\exp\bigg(igt\frac{\normh}{(n+\alpha)(n^*_{g,-}+\alpha)}\bigg)-1\bigg\} \\\times \WIt'(g,h,n,k), \end{multline*}
Guided by \eqref{exp bound} and $n^*_{g,-}\leqs n$, we divide the innermost sum into the cases $n \leqs  (gt\normh)^{1/2}(\log t)^{1/3}$ and $n > (gt\normh)^{1/2}(\log t)^{1/3}$ and call the resulting sums $\mc{I}_1$ and $\mc{I}_2$ respectively. 

Using $h\ll T^{1/2+\epsilon}$ together with \eqref{exp bound} we get
\begin{equation*} \mc{I}_1 \ll \sum_{1 \leqs g\ll T^{\epsilon}}\frac{1}{g} \sum_{\substack{1 \leqs h\ll T^{1/2+\epsilon}\\\normh<\frac{1}{2g}}} \frac{1}{\normh}\sum_{\substack{k> h\\(k,h)=1}} \sum_{\substack{n\equiv -\ol{h}\near{h} \!\!\!\mod k\\gk< n \leqs  (gt\normh)^{1/2}(\log t)^{1/3}}}1. \end{equation*}
Note that the innermost sum is empty unless $k\leqs(t\norm{h\alpha}/g)^{1/2}(\log t)^{1/3}$, whence, performing the sum over $n$, 
\begin{equation*}
\mc{I}_1 
	\ll T^{1/2} (\log T)^{1/3} \sum_{1 \leqs g\ll T^{\epsilon}}\frac{1}{\sqrt{g}} \sum_{\substack{1 \leqs h\ll T^{1/2+\epsilon}\\\normh<\frac{1}{2g}}} \frac{1}{\normh^{1/2}} \sum_{\substack{h< k\leqs (t\norm{h\alpha}/g)^{1/2}(\log t)^{1/3}}} \frac{1}{k}. \end{equation*}
Again, the innermost sum is empty unless $h< (t\norm{h\alpha}/g)^{1/2}(\log t)^{1/3}<t^{1/2}(\log t)^{1/3}/g^{3/2}$, and thus,
\begin{align*} 
\mc{I}_1 
\ll &
\,\,T^{1/2}(\log T)^{4/3}\sum_{1 \leqs g\ll T^{\epsilon}}\frac{1}{\sqrt{g}}
 \sum_{1\leqs h\leqs t^{1/2}(\log t)^{1/3}/g^{3/2}} \frac{1}{\normh^{1/2}} 
 \ll 
\,\, T(\log T)^{5/3}
\end{align*}
by Lemma~\ref{kruse}. 

For $\mc{I}_2$, we use the other half of \eqref{exp bound} to find 
\begin{equation*}
\mc{I}_2\ll T\sum_{1 \leqs g\ll T^{\epsilon}}\sum_{\substack{h\geqs 1\\\normh<\frac{1}{2g}}}\sum_{\substack{k> h\\(h,k)=1}}
 {\sum_n}^\star \frac{1}{(n+\alpha)(n^*_{g,-}+\alpha)} \WIt'(g,h,n,k), \end{equation*} 
where $\star$ denotes the constraints \[n\equiv -\ol{h}\near{h}\!\!\!\mod k,\] \[ \max(gk,(gt\normh)^{1/2}(\log t)^{1/3})< n \leqs T^{1/2+\epsilon}.\]

Now, since $n^*_{g,-} = n - g(k - h) - (nh+\near{h})/k$ we have 
\begin{equation*}\label{n* bound}
\begin{split}
n_{g,-}^*+\alpha
= &
(1-\tfrac{h}{k})(n+\alpha)-g(k-h)-(\near{h}-h\alpha)/k
\\
\geqs &
\tfrac{k-h}{k}(n+\alpha-gk)-\tfrac{1}{2k}
\geqs 
\tfrac{k-h}{2k}(n+\alpha-gk)
\end{split}
\end{equation*}
since $k-h\geqs 1$ and $n+\alpha-gk\geqs 1+\alpha\geqs 1$. Also, $\star$ implies
\[ \frac{1}{n+\alpha} \ll \frac{1}{\sqrt{g}} \cdot \frac{1}{T^{1/2}(\log T)^{1/3}}\cdot \frac{1}{\normh^{1/2}}.\]
Inserting these bounds above, we get
\begin{equation*}
\mc{I}_2\ll \frac{T^{1/2}}{(\log T)^{1/3}}\sum_{1 \leqs g\ll T^{\epsilon}}\frac{1}{\sqrt{g}}\sum_{\substack{h\geqs 1\\\normh<\frac{1}{2g}}}\frac{1}{\normh^{1/2}}\sum_{\substack{k> h\\(h,k)=1}}\frac{k}{k-h}
 {\sum_n}^\star \frac{1}{(n-gk)} \WIt'(g,h,n,k), \end{equation*}

By \eqref{WIt bound} we have 
\begin{equation*} \label{wtpp} \WIt' \ll \1_{n+\alpha\leqs \tau} + \1_{n+\alpha>\tau}(\tfrac{\tau}{n+\alpha})^3.\end{equation*}
Thus,
\begin{equation*}\begin{split} {\sum_n}^\star \frac{1}{(n-gk)} \WIt' & =  \bigg[\,\,\,\sideset{}{^\star}\sum_{gk < n \leqs 2gk} + \sideset{}{^\star}\sum_{n > 2gk}\bigg] \frac{1}{(n-gk)} \WIt' \\& \ll \frac{\log T}{k}\bigg(\1_{k \leqs \tau/g} + \1_{k > \tau/g} (\tfrac{\tau}{g k})^3\bigg) \end{split}\end{equation*}
where in the first sum here, we have used $(n+\alpha) \asymp gk$, while in the second we have computed the logarithmic sum when $n+\alpha\leqs \tau$ and for $n+\alpha>\tau$ trivially bounded $(n-gk)^{-1} \leqs (2n)^{-1}$ and applied $1=\mathds{1}_{k\leqs \tau/g}+\mathds{1}_{k> \tau/g}$, summing over $n\gg \tau$ in the first term and $n\gg gk$ in the second. The log-factors arising in this process can always be replaced by $\log T$, since we have $gk \leqs T^{1/2+\epsilon}$ as entailed by the weights.

The sum over $k$ can be performed similarly, to get
\begin{equation*} \sum_{\substack{k>h\\ (k,h)=1}}\frac{1}{k-h}\bigg(\1_{k \leqs \tau/g} + \1_{k > \tau/g} (\tfrac{\tau}{g k})^3\bigg) \ll \log T\bigg(\1_{h\leqs \tau/g} + \1_{h>\tau/g} (\tfrac{\tau}{gh})^3\bigg). \end{equation*}
Thus,
\begin{equation*} \begin{split}
\mc{I}_2 
& \ll T^{1/2}(\log T)^{5/3} \sum_{1\leqs g\ll T^\epsilon} \frac{1}{\sqrt{g}}\sum_{\substack{h\geqs 1\\\normh<\frac{1}{2g}}}\frac{1}{\normh^{1/2}}\bigg(\1_{h\leqs \tau/g} + \1_{h>\tau/g} (\tfrac{\tau}{gh})^3\bigg) 
\\ 
& \ll T (\log T)^{5/3}, 
\end{split} \end{equation*}
by partial summation and Lemma~\ref{kruse}.

\section{Proof of Proposition~\ref{S2 prop}}\label{sec: S2 prop}
\subsection{Diagonal terms} Let 
\[
\mc{J}
=
 \int_\mathbb{R}\bigg|\sum_{n\geqs 1}\frac{e(-n\alpha)}{n^{1/2-it}}\wjt(n)\bigg|^4\Phi(t/T)dt.
\]
Expanding the fourth power we find 
\begin{align*}
\mc{J}
 = &
 \sum_{n_j\geqs 1} \frac{e(-\alpha(n_1+n_2-n_3-n_4))}{\sqrt{n_1n_2n_3n_4}}
 \int_\mb{R}\bigg(\frac{n_1n_2}{n_3n_4}\bigg)^{it}\wjt(\ul{n})\Phi(t/T)dt
 \\
 = &
 \mc{J}_D+\mc{J}_O
\end{align*}
where $\wjt(\ul{n})=\prod_{j=1}^4 w_t(n_j)$, $\mc{J}_D$ is the sum with $n_1n_2=n_3n_4$ and $\mc{J}_O$ is the remaining off-diagonal sum.  $\mc{J}_D$ can be further decomposed into the sum for which $n_1+n_2=n_3+n_4$, and that for which $n_1+n_2\neq n_3+n_4$. In the first case $\{n_1,n_2\}$ is a permutation of $\{n_3,n_4\}$ and thus we find 
\begin{multline*}
\mc{J}_D
=
\int_\mb{R}\bigg(2\bigg(\sum_{n\geqs 1}\frac{w_t(n)^2}{n}\bigg)^2-\sum_{n\geqs 1}\frac{w_t(n)^4}{n^2}\bigg)\Phi(t/T)dt
\\ +
\sum_{\substack{n_1n_2=n_3n_4\\n_1+n_2\neq n_3+n_4}}
\frac{e(-\alpha(n_1+n_2-n_3-n_4))}{\sqrt{n_1n_2n_3n_4}}
 \int_\mb{R}\wjt(\ul{n})\Phi(t/T)dt.
\end{multline*}

By a calculation similar to that in \S\ref{first diag sec}, the first term on the right is
\begin{align*}
2\int_\mb{R}\bigg(\sum_{n\geqs 1}\frac{w_t(n)^2}{n}\bigg)^2\Phi(t/T)dt
= &
2\int_\mb{R}\bigg(\frac{1}{2}\log(t/2\pi )+O(1)\bigg)^2\Phi(t/T)dt
\\
= &
\frac{\hat{\Phi}(0)}{2}T(\log T)^2+O(T\log T)
\end{align*}
whilst the second term is clearly $O(T)$. The third term is quite delicate and here we have the following lemma. 

\begin{lem}Suppose $\alpha$ is an irrational with $\mu(\alpha)<3$. Then 
\[
\sum_{\substack{n_1n_2=n_3n_4\\n_1+n_2\neq n_3+n_4}}
\frac{e(-\alpha(n_1+n_2-n_3-n_4))}{\sqrt{n_1n_2n_3n_4}} \int_\mb{R}\wjt(\ul{n})\Phi(t/T)dt
 =
c\hat{\Phi}(0)T+O(T/(\log T)^\epsilon)
\]
where 
\[
c=c_T
=
\sum_{d\leqs (\log T)^{4+\epsilon}}\frac{\mu(d)}{d^2}\sum_{m,\ell\geqs 1}\frac{1}{m\ell}\int_1^\infty\int_1^\infty\frac{1}{(m+y)^{2}(\ell+x)^{2}}\Big(\sum_{h\leqs y}\sum_{k\leqs x}e(-\alpha d hk) \Big)dxdy
\]
which satisfies $c_T\ll \log\log T$.
\end{lem}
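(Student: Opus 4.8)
The plan is to parametrise the harmonic diagonal $n_1n_2=n_3n_4$ and reduce everything to the bilinear exponential sum estimated in Lemma~\ref{expsum lem}. Every solution of $n_1n_2=n_3n_4$ in positive integers is of the form $n_1=AB$, $n_2=CD$, $n_3=AC$, $n_4=BD$ with $(B,C)=1$, this being a bijection ($A=(n_1,n_3)$); then $\sqrt{n_1n_2n_3n_4}=ABCD$ and $n_1+n_2-n_3-n_4=(A-D)(B-C)$, so the restriction $n_1+n_2\neq n_3+n_4$ becomes $A\neq D$ and $B\neq C$. Detecting $(B,C)=1$ by M\"obius inversion and writing $B=d\beta$, $C=d\gamma$ (the diagonal $\beta=\gamma$ then drops out, which is precisely the excision of the lone coprime solution $B=C=1$), the sum in the lemma takes the shape
\[
\sum_{d\geqs 1}\frac{\mu(d)}{d^{2}}\sum_{A\neq D,\,\beta\neq\gamma}\frac{e\big(-\alpha d(A-D)(\beta-\gamma)\big)}{A\beta\gamma D}\int_{\mathbb{R}}w_t(Ad\beta)\,w_t(d\gamma D)\,w_t(Ad\gamma)\,w_t(d\beta D)\,\Phi(t/T)\,dt .
\]
The two symmetries $A\leftrightarrow D$ and $\beta\leftrightarrow\gamma$ each conjugate $e(\cdot)$ while permuting the four weights, and the whole expression is real; I would use this to restrict to $A>D$, $\beta>\gamma$ and then set $D=m$, $A=m+k$, $\gamma=\ell$, $\beta=\ell+h$ with $h,k\geqs 1$, so that the phase becomes $e(-\alpha dhk)$ and the harmonic factor becomes $\tfrac1{m\ell(m+k)(\ell+h)}$.

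Next I would truncate the $d$-sum. By \eqref{weight bound} each weight is $\ll\1_{\,\cdot\,\leqs T^{1/2+\epsilon}}$, while the product relation forces $\max(A,D)\max(\beta,\gamma)\,d\ll T^{1/2}$; estimating all four weights, the $t$-integral, and the sums over $A,D,\beta,\gamma$ trivially, the contribution of a fixed $d$ is $\ll d^{-2}\,T(\log T)^{4}$, so the terms with $d>(\log T)^{4+\epsilon}$ contribute $\ll T/(\log T)^{\epsilon}$. This is the source both of the error term in the lemma and of the truncation $d\leqs(\log T)^{4+\epsilon}$ occurring in $c_T$.

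For the main term, for each remaining $d$ the elementary identity $\int_{k}^{\infty}(m+y)^{-2}\,dy=(m+k)^{-1}$ lets one write $\tfrac1{(m+k)(\ell+h)}=\int_{k}^{\infty}\!\int_{h}^{\infty}(m+y)^{-2}(\ell+x)^{-2}\,dx\,dy$; inserting $\int_{\mathbb{R}}\Phi(t/T)\,dt=T\hat\Phi(0)$, replacing the weights by the sharp cutoffs $\1_{\,\cdot\,\leqs\tau}$, and interchanging summation and integration, the sum over $m,\ell,h,k$ collapses (after relabelling the two integration variables, which is legitimate since the outer $m,\ell$-sum is symmetric) to $T\hat\Phi(0)$ times the $d$th summand of $c_T$. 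It then remains to estimate the errors incurred: (i) replacing $w_t$ by $\1_{\,\cdot\,\leqs\tau}$, together with the dependence of $\tau=\sqrt{t/2\pi}$ on $t$ inside the $t$-integral, which is handled via \eqref{weight bound}, \eqref{weight diff} and the smoothness of $\Phi$; and (ii) completing the $h,k$-sums and the inner $m,\ell$-sums to their full ranges. A crude estimate is too lossy for (i) and fails outright for (ii) (the completed sum is only conditionally convergent), so the phase $e(-\alpha d(A-D)(\beta-\gamma))$ must be retained: the transition range of $w_t$ and the tails of the four auxiliary sums are confined to the shell where the harmonics $n_j$ have size $\asymp T^{1/2}$, and running the same partial-summation reduction there against Lemma~\ref{expsum lem} produces a power saving $T^{-\delta/\mathrm{const}}$, which, summed against $|\mu(d)|/d^{2}$ over $d\leqs(\log T)^{4+\epsilon}$, is absorbed into $O(T/(\log T)^{\epsilon})$.

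Finally, for the bound $c_T\ll\log\log T$, Lemma~\ref{expsum lem} gives $\sum_{h\leqs y}\sum_{k\leqs x}e(-\alpha dhk)\ll d(xy)^{1-\delta/5}$, hence $\int_{1}^{\infty}\!\int_{1}^{\infty}(xy)^{1-\delta/5}(m+y)^{-2}(\ell+x)^{-2}\,dx\,dy\ll m^{-\delta/5}\ell^{-\delta/5}$; summing $\tfrac1{m\ell}$ over $m,\ell$ bounds the $d$th summand of $c_T$ by $\ll d$, so $c_T\ll\sum_{d\leqs(\log T)^{4+\epsilon}}d^{-1}\ll\log\log T$. Note that it is exactly because Lemma~\ref{expsum lem} furnishes only a power (rather than, say, a logarithmic) saving that one cannot let $d\to\infty$, so that $c_T$ genuinely grows with $T$ and no fixed constant is obtained. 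The hard part of the whole argument is the error analysis of the previous paragraph: one must extract cancellation from $\sum e(-\alpha dhk)$ precisely in the ranges that a direct estimate cannot handle, and do so uniformly over all $d$ up to $(\log T)^{4+\epsilon}$.
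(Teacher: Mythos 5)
Your parametrisation ($A=(n_1,n_3)$, etc.), the identity $n_1+n_2-n_3-n_4=(A-D)(B-C)$, the M\"obius detection of $(B,C)=1$, the truncation of the $d$-sum at $(\log T)^{4+\epsilon}$ with cost $\ll T/(\log T)^\epsilon$, and the derivation of $c_T\ll\log\log T$ all match the paper's proof line for line. Where you genuinely diverge is in how the main term is extracted. The paper replaces each of the four weights $w_t$ by its Mellin representation and writes the inner $h,k,m,\ell$-sum as a four-fold contour integral over $\Re(s_j)=1/\log T$; the partial-summation identity \eqref{int contin}, which is the same as your $\tfrac1{(m+k)(\ell+h)}=\int_k^\infty\int_h^\infty(m+y)^{-2}(\ell+x)^{-2}\,dx\,dy$, is inserted precisely to give the integrand analytic continuation into $\Re(s_j)<0$ via Lemma~\ref{expsum lem}. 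The residues at $s_1=\cdots=s_4=0$ then produce $c_T\hat\Phi(0)T$ in one stroke (the $(\tau/d)^{s_j}$ factors all become $1$, so no $t$-dependence survives beyond $\Phi(t/T)$), while the shifted contours contribute $\ll(\tau/d)^{-\delta/40}$, a clean power saving. Your route — sharp cutoffs followed by completion of the sums — aims at the same target but has to handle the same cancellation by hand.

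That is where the gap lies. You explicitly flag the error analysis of steps (i) and (ii) as "the hard part," and indeed it is: one must simultaneously quantify the transition regions of four weights, strip the $t$-dependence of $\tau$ out of the cutoffs so that $T\hat\Phi(0)$ can be factored, and extend the restricted $h,k,m,\ell$-sums to their full (only conditionally convergent, as you note) ranges, all while retaining the phase $e(-\alpha dhk)$ to get cancellation via Lemma~\ref{expsum lem}, uniformly in $d\leqs(\log T)^{4+\epsilon}$. You correctly name the tools but do not carry this out; the paper's Mellin framing is exactly the device that makes (i) and (ii) into a single contour shift, and without that device your sketch stops short of a proof at precisely the point you identify as hard. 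One small correction to your closing remark: what prevents letting $d\to\infty$ in $c_T$ is the explicit factor of $d$ in the bound of Lemma~\ref{expsum lem} (giving a $d$th summand $\ll 1/d$); the power-versus-logarithmic dichotomy you draw is not quite the relevant one.
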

\begin{proof}


We push the sum through the integral and reparametrize the sum. 

 Let $m_1 = (n_1,n_3)$ and $m_2 = (n_2,n_4)$. Then $m_1m_2$ divides both sides of $n_1n_2 = n_3n_4$. Dividing throughout, we see that it must be the case that
\begin{equation*} \frac{n_1}{(n_1,n_3)} = \frac{n_4}{(n_2,n_4)},\qquad \frac{n_2}{(n_2,n_4)} = \frac{n_3}{(n_1,n_3)}.\end{equation*}
Calling the former $\ell_1$ and the latter $\ell_2$, we see that the solutions to $n_1n_2 = n_3n_4$ are uniquely parametrized by 
\begin{equation*}\begin{split}
n_1 = m_1\ell_1, \qquad \qquad  n_2 = m_2\ell_2, \\ n_3 = m_1\ell_2, \qquad \qquad n_4 = m_2\ell_1, \end{split} \end{equation*}
with $(\ell_1,\ell_2) = 1$. Furthermore, $n_1 + n_2 - n_3 - n_4 = (m_1 - m_2)(\ell_1 - \ell_2)$, and so the constraint $n_1 + n_2 \neq n_3 + n_4$ is equivalent to $m_1 \neq m_2$ and $\ell_1 \neq \ell_2$. The sum then becomes,
\begin{equation*}
\sum_{\substack{m_j,\ell_j\geqs 1\\m_1 \neq m_2,\ell_1\neq\ell_2\\(\ell_1,\ell_2) = 1}}\frac{e(-\alpha(m_1 - m_2)(\ell_1-\ell_2))}{m_1m_2\ell_1\ell_2}
\WJt(m_1,m_2,\ell_1,\ell_2)
\end{equation*}
with
\begin{equation*} \WJt(m_1,m_2,\ell_1,\ell_2) = \prod_{j,k \in \{1,2\}} w_t(m_j\ell_k).\end{equation*}

Using $\1_{(\ell_1,\ell_2)=1}=\sum_{d|\ell_1,d|\ell_2}\mu(d)$, and interchanging the order of summation, the above sum can be written as 

\begin{equation*}
\sum_{d\geqs 1}\frac{\mu(d)}{d^2}\sum_{\substack{m_j,\ell_j\geqs 1\\m_1 \neq m_2,\ell_1\neq\ell_2}}
\frac{e(-\alpha d (m_1-m_2)(\ell_1-\ell_2))}{m_1m_2\ell_1\ell_2}
\WJt(m_1,m_2,d\ell_1,d\ell_2),\end{equation*}
by replacing $\ell_j$ with $d\ell_j$. Now, setting $h = m_1 - m_2$, $k = \ell_1 - \ell_2$, $m = m_2$, and $\ell = \ell_2$ and spending the symmetries $m_1 \leftrightarrow m_2$ and $\ell_1 \leftrightarrow \ell_2$, this simplifies to
\begin{equation*} 4\Re\sum_{d\geqs 1}\frac{\mu(d)}{d^2}\sum_{h,k,m,\ell\geqs 1}
\frac{e(-\alpha d hk)}{m\ell(m+h)(\ell+k)}\WJt'(d,h,k,m,\ell)\end{equation*}
where 
\begin{equation*} \WJt'(d,h,k,m,\ell) = w_t(dm\ell)w_t(d(m+h)\ell)w_t(dm(\ell+k))w_t(d(m+h)(\ell+k)).\end{equation*}
Since the weights restrict $h,k,m,\ell\ll T^{1/2+\epsilon}$ the inner sum is trivially $\ll (\log T)^4$ and so we may restrict the outer sum to $d\leqs (\log T)^{4+\epsilon}$ at the cost of $O((\log T)^{-\epsilon})$.

Unfolding the integrals in the weights we find that
\begin{multline*}
\sum_{h,k,m,\ell\geqs 1}
\frac{e(-\alpha d hk)}{m\ell(m+h)(\ell+k)}\WJt(d,h,k,m,\ell)
\\
=
\frac{1}{(2\pi i )^4}\int_{(c)^4} \sum_{\substack{h,k,m,\ell\geqs 1}}
\frac{e(-\alpha d hk)}{(m+h)^{1+s_1+s_3}m^{1+s_2+s_4}(\ell+k)^{1+s_1+s_4}\ell^{1+s_2+s_3}}
\\
\times\prod_{j=1}^4\bigg(\frac{\tau}{d}\bigg)^{s_j}G(s_j)\frac{ds_j}{s_j}
\end{multline*}
where we have taken $c=1/\log T$ which is allowed by the absolute convergence of the sum in the integrand. 

Consider the sum over $h$ and $k$, initially in the region $\Re(s_j)>0$. By partial summation this is given by 
\begin{equation}
\begin{split}\label{int contin}
&
\sum_{h,k\geqs 1}\frac{e(-\alpha d hk)}{(m+h)^{1+s_1+s_3}(\ell+k)^{1+s_1+s_4}}
= 
\sum_{h\geqs 1}\frac{1}{(m+h)^{1+s_1+s_3}}\sum_{k\geqs 1}\frac{e(-\alpha d hk)}{(\ell+k)^{1+s_1+s_4}}
\\
= &
\sum_{h\geqs 1}\frac{1}{(m+h)^{1+s_1+s_3}}\cdot (1+s_1+s_4)\int_1^\infty\frac{1}{(\ell+x)^{2+s_1+s_4}}\Big(\sum_{k\leqs x}e(-\alpha d hk) \Big)dx
\\
= &
(1+s_1+s_3)(1+s_1+s_4)
\\
&\qquad\qquad\times\int_1^\infty\int_1^\infty\frac{1}{(m+y)^{2+s_1+s_3}(\ell+x)^{2+s_1+s_4}}\bigg(\sum_{\substack{h\leqs y\\k\leqs x}}e(-\alpha d hk) \bigg)dxdy.
\end{split}
\end{equation}

Now, by Lemma~\ref{expsum lem}, $\alpha$ satisfies \eqref{square root bound} for some $\delta>0$, implying that the integral in \eqref{int contin} converges absolutely for $\Re(s_1+s_3), \Re(s_1+s_4)>-\delta/8$ and represents an analytic function in this region. Further, we see that the integrand of the multiple contour integral is analytic for $\Re(s_1+s_2+s_3+s_4) > -\delta/8$ as well. 

We now shift the $s_1$ integral to the line with $\Re(s_1)=-\delta/40$, say, picking a pole up at $s_1=0$. By \eqref{int contin} and \eqref{square root bound} the integral over the new line contributes $\ll (\tau/d)^{-\delta/40}$, which results in a negligible contribution. We then progressively shift the remaining integrals to the line with $\Re(s_j)=-\delta/40$ picking up a simple pole at $s_j=0$ each time and obtaining a negligible error in the process. The residues lead to the contribution 
\begin{equation*}
c_T = \sum_{d\leqs (\log T)^{4+\epsilon}}\frac{\mu(d)}{d^2}\sum_{m,\ell\geqs 1}\frac{1}{m\ell}\int_1^\infty\int_1^\infty\frac{1}{(m+y)^{2}(\ell+x)^{2}}\Big(\sum_{h\leqs y}\sum_{k\leqs x}e(-\alpha d hk) \Big)dxdy
\end{equation*}
and so the result follows by noting that \eqref{square root bound} implies
\[
c_T \ll \sum_{d\leqs (\log T)^{4+\epsilon}} \frac{1}{d} \ll \log\log T
\]
\end{proof}

\subsection{Off-diagonals}The off-diagonal sum is given by 
\[
\mc{J}_O
=
 \sum_{n_1n_2\neq n_3n_4} \frac{e(-\alpha(n_1+n_2-n_3-n_4))}{\sqrt{n_1n_2n_3n_4}}
 \int_\mb{R}\bigg(\frac{n_1n_2}{n_3n_4}\bigg)^{it}\wjt(\ul{n})\Phi(t/T)dt.
\]
Write $h=n_1n_2-n_3n_4\neq 0$. Then similarly to \S\ref{sec: S1 prop}, we may restrict to $|h|\ll n_3n_4/T^{1-\epsilon}$ since otherwise integrating by parts using \eqref{weight diff} gives a negligible error. We then apply  the expansions
\begin{align*}
\log\bigg(\frac{n_1n_2}{n_3n_4}\bigg)
= &
\frac{h}{n_3n_4}+O(T^{-2+\epsilon})
\\
\bigg(\frac{n_1n_2}{n_3n_4}\bigg)^{it}
= &
\exp\bigg(it\frac{h}{n_3n_4}\bigg)(1+O(T^{-1+\epsilon}))
\\
n_1n_2
= &
n_3n_4(1+O(T^{-1+\epsilon})).
\end{align*}
The error terms acquired when applying these are 
\[
\ll \frac{1}{T^{1-\epsilon}}\sum_{\substack{n_1n_2-n_3n_4=h\\h\ll n_3n_4/T^{1-\epsilon}}}\frac{1}{n_3n_4} \int_\mb{R}|\wjt(\ul{n})\Phi(t/T)|dt\ll T^\epsilon
\]
where in the second inequality we have let $h,n_3,n_4$ range freely and used a divisor bound for the number of solutions of $n_1n_2=n_3n_4+h$, similar to \S\ref{sec: S1 prop}. 
Therefore 
\begin{align*}
\mc{J}_O
= &
\sum_{\substack{n_1n_2-n_3n_4=h\\h\ll n_3n_4/T^{1-\epsilon}}}
 \frac{e(-\alpha(n_1+n_2-n_3-n_4))}{n_3n_4}
 \int_\mb{R}\exp\bigg(it\frac{h}{n_3n_4}\bigg)\wjt(\ul{n})\Phi(t/T)dt
 +O(T^\epsilon).
\end{align*}


\begin{lem} We have 
\[
\mc{J}_O
 \ll 
 T\log T
 \]
 for all $\alpha\in\mathbb{R}$.
\end{lem}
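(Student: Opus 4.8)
The plan is to follow the off-diagonal analysis of \S\ref{sec: S1 prop} (the proof of Proposition~\ref{S1 prop}), which becomes strictly simpler here. The quantity now playing the role of the denominator is $h = n_1n_2 - n_3n_4 \in \Z\setminus\{0\}$, so that $|h| \geqs 1$ and there is no small-denominator regime; in particular Lemma~\ref{kruse} and Lemma~\ref{expsum lem} play no role, and the shift $\alpha$ enters only through the unimodular factor $e(-\alpha(n_1+n_2-n_3-n_4))$, which we bound by $1$. This is exactly why the bound holds for every $\alpha \in \R$.

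Starting from the cleaned expression for $\mc{J}_O$, I would first integrate by parts once more in the $t$-integral, using \eqref{weight diff} and subtracting off the identically vanishing integral of the full derivative (exactly as in the passage to $\mc{I}'$ in \S\ref{sec: S1 prop}), so that $\int_\mb{R}\exp(it h/(n_3n_4))\wjt(\ul{n})\Phi(t/T)\,dt$ is replaced by a contribution weighted by $\{\exp(it h/(n_3n_4)) - 1\}$, which by \eqref{exp bound} is $\ll \min(1,\,T|h|/(n_3n_4))$. Since the weights force $n_j \ll T^{1/2+\epsilon}$, the close off-diagonal condition $|h| \ll n_3n_4/T^{1-\epsilon}$ now reads $1 \leqs |h| \ll T^{\epsilon}$.

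Next I would parametrize the solutions of $n_1n_2 - n_3n_4 = h$ exactly as in \S\ref{sec: S1 prop}: set $\ell = n_1 + n_2 - n_3 - n_4$, fix $h$, $\ell$ and $n := n_3$, and let $k = n_3 - n_1$ run over the divisors of $n\ell - h$, which gives $n_1 = n - k$, $n_2 = n + (n\ell-h)/k$ and $n_4 = n - k - \ell + (n\ell-h)/k$ with all four variables $\ll T^{1/2+\epsilon}$; one then pulls out $g = (k,\ell)$ as there. Splitting into the sign combinations of $(h,\ell)$, one is left with sums treated just as in the ``opposite signs, large denominator'' case \S\ref{S2 opp signs, large denominator}, with $|h| \geqs 1$ playing the role of the lower bound $\geqs \tfrac{1}{2g}$ and with no small-denominator case to consider. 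Concretely: one does the sum over the divisor $k$ using the decay $w_t(x) \ll \1_{x \leqs \tau} + \1_{x > \tau}(\tau/x)^3$ (the analogue of \eqref{WIt bound}), then the sum over $n$, then the sum over $\ell$ (which costs $\ll \tau/g$ via $\sum_{m \leqs x}\log(x/m) \ll x$), and finally the sums $\sum_{1 \leqs |h| \ll T^\epsilon/g}1/|h| \ll \log T$ and $\sum_{1 \leqs g}1/g^2 \ll 1$, arriving at $\ll \tau^2\log T \ll T\log T$; the ``bounded''-type subfamilies contribute only $\ll T^{1/2+\epsilon}$, as in \S\ref{S1 same signs}.

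The main obstacle is purely the bookkeeping: one must arrange the sums over $k$ and $n$ so that the weight decay is exploited efficiently and no extraneous power of $\log T$ (or $T^\epsilon$) is picked up, and in particular the short but nontrivial range $1 \leqs |h| \ll T^\epsilon$ of the denominator must be summed with only a single logarithmic loss — which is precisely where the crucial inequality $|h| \geqs 1$ is used. Everything else is a more routine version of the argument already carried out for $\mc{I}'$ in \S\ref{sec: S1 prop}.
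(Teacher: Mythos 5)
Your approach is correct in its central insight --- since $h = n_1n_2 - n_3n_4$ is a nonzero integer one has $|h|\geqs 1$, so the small-denominator regime never occurs, Diophantine input is unnecessary, and $\alpha$ enters only through $|e(-\alpha(n_1+n_2-n_3-n_4))|\leqs 1$ --- and the claimed bound is achievable along these lines. However, you take a genuinely different and heavier route than the paper. The paper does not reintroduce the $(h,\ell,k,g)$-parametrisation of \S\ref{sec: S1 prop}. Instead it bounds everything trivially after pushing the sum through the integral, removes $n_4$ by writing $n_4 = (n_1n_2-h)/n_3$ subject to $n_1 n_2 \equiv h \pmod{n_3}$, extracts $g=(n_2,n_3)$ (so $g\mid h$), inverts $n_2$ modulo $n_3$ to get $n_1$ in an arithmetic progression, and then splits into $n_2 < n_3$ and $n_2 > n_3$, using only the weight decay \eqref{weight bound}. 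This is shorter: no auxiliary variable $\ell$, no sign case analysis, no integration by parts beyond the initial one already done to produce $1/h$.

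There is also a gap in your sketch where you appeal to \S\ref{S1 same signs}. In \S\ref{S1 same signs} the constraint $|h_1+h_2\alpha|\ll (n+\alpha)^2/(gT^{1-\epsilon})$ with $h_1,h_2$ of the same sign forces \emph{both} $h_1,h_2 \ll T^\epsilon/g$, which is what makes that case degenerate. For $\mc{J}_O$ the analogous constraint bounds only $h$; the auxiliary variable $\ell = n_1+n_2-n_3-n_4$ ranges freely up to $T^{1/2+\epsilon}$, so the ``same-sign'' subcase does not collapse to $T^{1/2+\epsilon}$ and cannot be brushed aside as ``bounded-type.'' Moreover, the crucial inequality $|h_2|\leqs k$ that saves a logarithm in \S\ref{S2 opp signs, large denominator} (it is what lets $\sum_{h_2}\log(\tau/(gh_2))\ll\tau/g$ rather than $(\tau/g)\log T$) does not come for free here in the sign case $\ell>0$: you would instead have to use the weight decay on $n_2 = n + (n\ell-h)/k$ to get $n\ll \tau k/(k+\ell)$, so that the $n$-sum yields $\tau/(k+\ell)$ rather than $\tau/k$, before the $\ell$-sum can be executed with only a bounded loss. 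As written your bookkeeping would land at $T(\log T)^2$ in that subcase, which is not sufficient for the error term in Proposition~\ref{S2 prop}. The paper's congruence-based parametrisation sidesteps these issues entirely.
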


\begin{proof}
Integrating by parts we acquire the main term
\[
 \frac{1}{i}\sum_{\substack{n_1n_2-n_3n_4=h\\h\ll n_3n_4/T^{1-\epsilon}}} \frac{e(-\alpha(n_1+n_2-n_3-n_4))}{h}\int_\mathbb{R}\exp\bigg(it\frac{h}{n_3n_4}\bigg) \frac{d}{dt}\big(\wjt(\ul{n})\Phi(t/T)\big)dt
\]
Let us first consider the term involving the derivative of $\Phi(t/T)$. Pushing the sum through the integral we acquire the sum
\begin{align*}
&
\sum_{0\neq h \ll T^{\epsilon}}\frac{1}{h}\sum_{\substack{n_1n_2-n_3n_4=h\\n_3n_4\gg |h|T^{1-\epsilon}}} {e(-\alpha(n_1+n_2-n_3-n_4))}\exp\bigg(it\frac{h}{n_3n_4}\bigg)\wjt(\ul{n}) 
\\
\ll &
\sum_{0\neq h \ll T^{\epsilon}}\frac{1}{|h|}\sum_{\substack{n_1n_2-n_3n_4=h\\n_3n_4\gg |h|T^{1-\epsilon}}} |w_t(n_1)w_t(n_2)w_t(n_3)w_t(n_4)|. 
\end{align*}
We remove the variable $n_4$ by writing this as 
\begin{align*}
 &
\sum_{0\neq |h|\ll T^\epsilon}\frac{1}{|h|}\sum_{n_2,n_3}|w_t(n_2)w_t(n_3)|\sum_{\substack{n_1n_2\equiv h \!\!\!\mod n_3\\ n_1n_2\gg |h|T^{1-\epsilon}+h}}|w_t(n_1)w_t(\tfrac{n_1n_2-h}{n_3})|
\\
= &
\sum_g\sum_{0\neq |h|\ll T^\epsilon}\frac{1}{|h|}\sum_{(n_2,n_3)=1}|w_t(gn_2)w_t(gn_3)|\sum_{\substack{gn_1n_2\equiv h \!\!\!\mod gn_3\\ gn_1n_2\gg |h|T^{1-\epsilon}+h}}|w_t(n_1)w_t(\tfrac{gn_1n_2-h}{gn_3})|.
\end{align*}
The inner sum is empty unless $g\mid h$ and hence this becomes
\[
\sum_g\frac{1}{g}\sum_{0\neq |h|\ll T^\epsilon/g}\frac{1}{|h|}\sum_{(n_2,n_3)=1}|w_t(gn_2)w_t(gn_3)|\sum_{\substack{n_1\equiv \ol{n_2}h \!\!\!\mod n_3\\ n_1n_2\gg |h|T^{1-\epsilon}+h}}|w_t(n_1)w_t(\tfrac{n_1n_2-h}{n_3})|
\]
where we have cancelled a factor of $g$ in the congruence condition and inverted $n_2$ modulo $n_3$.

Consider the sum with $n_2<n_3$. By \eqref{weight bound} this gives a contribution
\begin{align*}
\ll &
 \sum_g\frac{1}{g}\sum_{0\neq |h|\ll T^\epsilon/g}\frac{1}{|h|}\sum_{n_3}|w_t(gn_3)|\sum_{n_2<n_3}\sum_{\substack{n_1\equiv \ol{n_2}h \!\!\!\mod n_3}}|w_t(n_1)|
\\
\ll &
\sum_g\frac{1}{g}\sum_{0\neq |h|\ll T^\epsilon/g}\frac{1}{|h|}\sum_{n_3}|w_t(gn_3)|\sum_{n_2<n_3}
\sum_{\substack{n_1\equiv \ol{n_2}h \!\!\!\mod n_3}}\big(\mathds{1}_{n_1\leqs \tau}+\mathds{1}_{n_1> \tau}\big(\tfrac{\tau}{n_1}\big)^2\big)
\\
\ll &
\,\,\tau\sum_g\frac{1}{g}\sum_{0\neq |h|\ll T^\epsilon/g}\frac{1}{|h|}\sum_{n_3}\big(\mathds{1}_{n_3\leqs \tau/g}+\mathds{1}_{n_3> \tau/g}\big(\tfrac{\tau}{gn_3}\big)^2\big)
\\
\ll &
\,\,T\log T
\end{align*}
Similarly, the case of $n_2>n_3$ can be bounded by
\begin{align*}
\ll &
 \sum_g\frac{1}{g}\sum_{0\neq |h|\ll T^\epsilon/g}\frac{1}{|h|}\sum_{n_3}|w_t(gn_3)|\sum_{n_2>n_3}|w_t(gn_2)|\sum_{\substack{n_1\equiv \ol{n_2}h \!\!\!\mod n_3}}|w_t(\tfrac{n_1n_2-h}{n_3})|
\\
\ll &
\sum_g\frac{1}{g}\sum_{0\neq |h|\ll T^\epsilon/g}\frac{1}{|h|}\sum_{n_3}|w_t(gn_3)|\sum_{n_2>n_3}\big(\mathds{1}_{n_2\leqs \tau/g}+\mathds{1}_{n_2> \tau/g}(\tfrac{\tau}{gn_2})^2\big)
\cdot(\tfrac{\tau}{n_2})
\\
\ll &
\,\,\tau\sum_g\frac{1}{g}\sum_{0\neq |h|\ll T^\epsilon/g}\frac{1}{|h|}\sum_{n_3}\big(\mathds{1}_{n_3\leqs \tau/g}\big(\log(\tfrac{\tau}{gn_3})+1\big) +\mathds{1}_{n_3> \tau/g}\big(\tfrac{\tau}{gn_3}\big)^2\big)
\\
\ll &
\,\,T\log T
\end{align*}
where the first line follows from applying \eqref{weight bound}, since $(\tau n_3+h)/n_2=(1+o(1))\tau n_3/n_2$, and the last line follows from the fact that $\sum_{m\leqs x}\log(x/m)\ll x$. Thus, for the term involving the derivative of $\Phi(t/T)$ we acquire a contribution
\[
\ll \int_\mb{R} \frac{1}{T}|\Phi^\prime(t/T)|\cdot T\log T dt\ll T\log T.
\]
For the terms involving $d/dt(w_t(n_j))$ we apply \eqref{weight diff} appropriately. This gives us the requisite factor of $T^{-1}$ along with the same quality bounds that have been applied for $|w_t(n_j)|$ throughout. The result then follows. 
\end{proof}


\section{Concluding Remarks} \label{sec: issues}

We outline here some difficulties in upgrading Theorem~\ref{main thm} to an asymptotic. 
The arguments from \S\S\ref{sec: S1 prop}-\ref{sec: S2 prop} can be adapted to prove the following proposition. 

 \begin{prop}\label{S1S2 prop} Under the same conditions as Proposition~\ref{S1 prop},   
 \[
 \int_\mathbb{R}\bigg|\sum_{n\geqs 0}\frac{w_t(n+\alpha)}{(n+\alpha)^{1/2+it}}\bigg|^2\bigg|\sum_{n\geqs 1}\frac{e(-n\alpha)}{n^{1/2-it}}w_t(n)\bigg|^2\Phi(t/T)dt
 =
\frac{\hat{\Phi}(0)}{4}T(\log T)^2+O(T(\log T)^{5/3}).
 \]
 \end{prop}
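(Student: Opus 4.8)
The plan is to run the arguments of \S\ref{sec: S1 prop} and \S\ref{sec: S2 prop} in tandem. First I would expand the product of the two squared sums and integrate term by term, writing the left-hand side as
\[
\sum_{\substack{n_1,n_2\geqs 0\\ n_3,n_4\geqs 1}}\frac{e(\alpha(n_4-n_3))}{\sqrt{(n_1+\alpha)(n_2+\alpha)n_3n_4}}\int_\mathbb{R}\bigg(\frac{(n_2+\alpha)n_3}{(n_1+\alpha)n_4}\bigg)^{it}\Phi(t/T)\wkt(\ul{n})\,dt,
\]
where $\wkt(\ul{n})=w_t(n_1+\alpha)w_t(n_2+\alpha)w_t(n_3)w_t(n_4)$. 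The diagonal is the equation $(n_1+\alpha)n_4=(n_2+\alpha)n_3$; equating rational and $\alpha$-parts and using that $\alpha$ is irrational forces $n_3=n_4$ and then $n_1=n_2$, so the diagonal contributes
\[
\int_\mathbb{R}\bigg(\sum_{n\geqs 0}\frac{w_t(n+\alpha)^2}{n+\alpha}\bigg)\bigg(\sum_{n\geqs 1}\frac{w_t(n)^2}{n}\bigg)\Phi(t/T)\,dt.
\]
By the estimates of \S\ref{first diag sec} each inner sum equals $\tfrac12\log(t/2\pi)+O(1)$, so this is $\tfrac{\hat{\Phi}(0)}{4}T(\log T)^2+O(T\log T)$, the claimed main term; note there is now a single diagonal family and no subtracted square, which is why the constant is $\tfrac14$ rather than the $\tfrac12$ of Propositions~\ref{S1 prop} and~\ref{S2 prop}.

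Next I would bound the off-diagonal terms by $\ll T(\log T)^{5/3}$. Put $h_1=n_1n_4-n_2n_3$ and $h_2=n_4-n_3$, so that $(n_1+\alpha)n_4-(n_2+\alpha)n_3=h_1+h_2\alpha$ with $h_1\ll T^{1+\epsilon}$, $h_2\ll T^{1/2+\epsilon}$ by the support of the weights. Exactly as in \S\S\ref{sec: S1 prop}--\ref{sec: S2 prop}, integration by parts via \eqref{weight diff} restricts the sum to the close range $|h_1+h_2\alpha|\ll (n_1+\alpha)n_4\,T^{-1+\epsilon}$ up to a negligible error; Taylor expanding $\log((n_2+\alpha)n_3/(n_1+\alpha)n_4)$ and $((n_2+\alpha)n_3/(n_1+\alpha)n_4)^{it}$ introduces an error bounded by $T^{1/2+\epsilon}$ via an elementary divisor-bound count (using the congruence $h_1\equiv -n_2n_3\pmod{n_3+h_2}$) as in \S\ref{sec: S1 prop}; and a further integration by parts combined with $\int_\mathbb{R}\tfrac{d}{dt}(\Phi(t/T)\wkt(\ul{n}))\,dt=0$ reduces matters to bounding
\[
\sum_{\substack{n_j\\ 0<|h_1+h_2\alpha|\ll (n_1+\alpha)n_4 T^{-1+\epsilon}}}\frac{e(\alpha h_2)}{h_1+h_2\alpha}\bigg\{\exp\bigg(it\frac{h_1+h_2\alpha}{(n_1+\alpha)n_4}\bigg)-1\bigg\}\wkt(\ul{n})
\]
for $t\asymp T$, to which \eqref{exp bound} applies (the terms with $n_1=0$ or $n_2=0$ being negligible by the argument of \S\ref{sec: S1 prop}).

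I would then parametrise the solutions of $h_1=n_1n_4-n_2n_3$, $h_2=n_4-n_3$: fixing $h_1,h_2,n_3$ gives $n_4=n_3+h_2$ and forces $n_2$ into a fixed residue class modulo $(n_3+h_2)/g$ with $g=(h_2,n_3)$ and $g\mid h_1$, after which $n_1$ is determined --- the analogue of \eqref{s1n2n4} and of the common divisor $g=(k,h_2)$ in \S\ref{sec: S1 prop}. This splits into the same three denominator cases: (i) $h_1,h_2$ of the same sign, where the close-diagonal constraint forces $h_1,h_2\ll g^{-1}T^\epsilon$ and \eqref{exp bound} with trivial estimation (as in \S\ref{S1 same signs}) gives $\ll T^{1/2+\epsilon}$; (ii) $h_1,h_2$ of opposite signs with $|h_1+h_2\alpha|\geqs 1/(2g)$, handled by bounding $1/|h_1+h_2\alpha|$ directly and summing the weighted progressions exactly as in \S\ref{S2 opp signs, large denominator} to get $\ll T\log T$; and (iii) $h_1,h_2$ of opposite signs with $|h_1+h_2\alpha|<1/(2g)$, where $h_1=\near{h_2}$ is determined by $h_2$ and $|h_1+h_2\alpha|=\norm{h_2\alpha}$, so that after splitting the innermost sum over $n$ at $n\asymp(gt\norm{h_2\alpha})^{1/2}(\log t)^{1/3}$ and applying the two halves of \eqref{exp bound} on the two pieces (as in the last subsection of \S\ref{sec: S1 prop}) one is reduced to sums $\sum_{h\ll T^{1/2+\epsilon}}\normh^{-1/2}$ carrying a prefactor $T^{1/2}(\log T)^{4/3}$, bounded by $\ll T(\log T)^{5/3}$ via Lemma~\ref{kruse}. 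The unimodular factor $e(\alpha h_2)$ is estimated trivially throughout, and the terms involving $\tfrac{d}{dt}w_t$ are dealt with using \eqref{weight diff} as before.

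The main obstacle I expect is case (iii): as in \S\ref{sec: S1 prop}, this is where the hypothesis $\mu(\alpha)<3$ enters, via Lemma~\ref{kruse}, to tame the terms in which $h_1+h_2\alpha$ is abnormally small; one must also check that the parametrisation of the twisted product $n_1n_4-n_2n_3$ (in place of the symmetric product of \S\ref{sec: S1 prop}) still permits the inner divisor and progression sums to be evaluated with the attendant losses bounded by powers of $\log T$, which is where the exponent $5/3$ is spent. Everything else is a routine transcription of \S\S\ref{sec: S1 prop}--\ref{sec: S2 prop}.
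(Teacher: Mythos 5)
Your proposal is correct in outline and takes essentially the same route the paper indicates (expand, isolate the diagonal, restrict to close off-diagonals, Taylor expand, integrate by parts, and split the remaining sum into the same three denominator cases resolved via Lemma~\ref{kruse}). The paper itself does not spell out the details of this proposition; it only remarks that the proof is ``slightly more straightforward'' than Proposition~\ref{S1 prop} because the analogue of \eqref{h_i} is simpler, and that the details are left to the reader, so there is no written proof to match line-by-line. A few points worth flagging. First, your choice $h_1=n_1n_4-n_2n_3$, $h_2=n_4-n_3$ differs from the paper's stated analogue $h_1=n_1n_2-n_3n_4$, $h_2=n_2-n_4$; this is only a relabelling (the paper groups a Hurwitz index with a periodic index on each side of the conjugation, you group Hurwitz with Hurwitz), and both exhibit the key simplification that $h_2$ is a two-term difference. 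Second, your description of the Taylor-error estimate as a ``divisor-bound count'' is slightly off: once $h_1,h_2,n_3,n_4$ are fixed, $n_2$ is \emph{uniquely} determined by $n_2=(n_1n_4-h_1)/n_3$ (no divisor function is needed), and the $T^{1/2+\epsilon}$ bound comes from letting $h_2,n_1,n_4$ range freely and bounding the number of admissible $h_1$ by $\ll(n_1+\alpha)n_4/T^{1-\epsilon}$; the bound itself is correct. Third, and most substantively, you are right that case~(iii) is where the work lies: after the $\text{wlog}$ from conjugation and extraction of $g$, the analogue of the paper's divisor parametrisation is (for instance) $k=n_1-n_2\geqs 1$ with $k\mid h_1+h_2 n_2$ and $n_4=(h_1+h_2 n_2)/k$, which makes the denominator $(n_1+\alpha)n_4\asymp n_2^2h_2/k$ rather than the $n^2(k-h)/k$ of Proposition~\ref{S1 prop}; the split at $n_2\asymp(gt\normh)^{1/2}(\log t)^{1/3}$ and the two halves of \eqref{exp bound} then go through in the same way and deliver $T(\log T)^{5/3}$ via Lemma~\ref{kruse}, but you do need to track the $g$-dependence and the $k$-sum carefully, as you anticipate. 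Finally, note a genuine simplification you have already exploited: since irrationality alone forces the diagonal $(n_1+\alpha)n_4=(n_2+\alpha)n_3$ to reduce to $n_1=n_2$, $n_3=n_4$, there is no ``Diophantine vs.\ harmonic diagonal'' distinction and hence no analogue of the bilinear-sum Lemma~\ref{expsum lem} needed, unlike in the proof of Proposition~\ref{S2 prop}.
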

In fact, the proof of this is slightly more straightforward than that of Proposition~\ref{S1 prop}, since the analogue of \eqref{h_i} is simpler,
\begin{equation*} \label{h_i2}
h_1=n_1n_2-n_3n_4,\qquad h_2=n_2-n_4.
\end{equation*}
Since Proposition~\ref{S1 prop} and \ref{S2 prop} are sufficient for Theorem~\ref{main thm} due to the first application of H\"older's inequality in \eqref{wasteful holder}, however, we leave the details to the reader.

On combining Propositions~\ref{S1 prop}, \ref{S2 prop}, and \ref{S1S2 prop} with \eqref{decomposition}, we see that one could prove an asymptotic of the shape
\[ \int_\R |\zeta(\tfrac12+it,\alpha)|^4 \Phi(t/T) dt \sim 2\hat{\Phi}(0)  T(\log T)^2, \]
from which the unsmoothed fourth moment could be deduced, provided one could show
\[ \int_\mathbb{R}\bigg(
4\Re S_1^2\ol{S_1S_2}
+2\Re S_1^2\ol{S_2}^2
+4\Re S_1S_2\ol{S_2}^2
\bigg)\Phi(t/T)dt =o(T(\log T)^2).\]
It is well known that achieving the required cancellation from such terms can be difficult due to the presence of the $\chi^2$ factors from the functional equation. Their rapid oscillation leads to long sums after applying the method of stationary phase, and these require very precise control with respect to the Diophantine properties of $\alpha$. Equivalently, one may apply a Heath-Brown \cite{HB} type approximate functional equation in which there are no cross terms or $\chi$ factors, but in which the sums are longer, and then one encounters the same difficulties in the off-diagonals. As a compromise, we work with shorter sums but must sacrifice an asymptotic when applying H\"older's inequality in \eqref{wasteful holder}.

\end{document}